\title{On moduli spaces for quasi-tilted algebras}
\author{Grzegorz Bobi\'nski}
\address{Faculty of Mathematics and Computer Science \\ Nicolaus Copernicus University \\ ul.~Chopina 12/18 \\ 87-100 Toru\'n \\ Poland}
\email{gregbob@mat.umk.pl}
\newtheorem*{conjecture}{Conjecture}
\newcounter{claim}[section]
\newtheorem{corollary}[claim]{Corollary}
\newtheorem{lemma}[claim]{Lemma}
\newtheorem{proposition}[claim]{Proposition}
\newcommand{\bc}{\mathbf{c}}
\newcommand{\bd}{\mathbf{d}}
\newcommand{\be}{\mathbf{e}}
\newcommand{\bh}{\mathbf{h}}
\newcommand{\bm}{\mathbf{m}}
\newcommand{\bx}{\mathbf{x}}
\newcommand{\by}{\mathbf{y}}
\newcommand{\bone}{\mathbf{1}}
\newcommand{\blambda}{\mbox{\boldmath $\lambda$}}
\newcommand{\calC}{\mathcal{C}}
\newcommand{\calF}{\mathcal{F}}
\newcommand{\calL}{\mathcal{L}}
\newcommand{\calM}{\mathcal{M}}
\newcommand{\calO}{\mathcal{O}}
\newcommand{\calP}{\mathcal{P}}
\newcommand{\calR}{\mathcal{R}}
\newcommand{\calT}{\mathcal{T}}
\newcommand{\calU}{\mathcal{U}}
\newcommand{\calV}{\mathcal{V}}
\newcommand{\calW}{\mathcal{W}}
\newcommand{\calX}{\mathcal{X}}
\newcommand{\calY}{\mathcal{Y}}
\newcommand{\calZ}{\mathcal{Z}}
\newcommand{\bbM}{\mathbb{M}}
\newcommand{\bbN}{\mathbb{N}}
\newcommand{\bbP}{\mathbb{P}}
\newcommand{\bbZ}{\mathbb{Z}}
\let\mod=\undefined
\DeclareMathOperator{\Du}{D} %
\DeclareMathOperator{\GL}{GL} %
\DeclareMathOperator{\op}{op} %
\DeclareMathOperator{\SI}{SI} %
\DeclareMathOperator{\Ann}{Ann} %
\DeclareMathOperator{\End}{End} %
\DeclareMathOperator{\Ext}{Ext} %
\DeclareMathOperator{\Hom}{Hom} %
\DeclareMathOperator{\mod}{mod} %
\DeclareMathOperator{\rep}{rep} %
\DeclareMathOperator{\sstab}{ss} %
\DeclareMathOperator{\chr}{char} %
\DeclareMathOperator{\spn}{span} %
\DeclareMathOperator{\pdim}{pdim} %
\DeclareMathOperator{\Proj}{Proj} %
\DeclareMathOperator{\supp}{supp} %
\DeclareMathOperator{\Coker}{Coker} %
\DeclareMathOperator{\gldim}{gldim} %
\DeclareMathOperator{\bdim}{\mathbf{dim}} %
\newcommand{\ol}{\overline}
\keywords{quasi-tilted algebra, moduli space, semi-invariant}
\subjclass[2010]{Primary: 16G10; Secondary: 16G60, 13A50}
\begin{document}

\begin{abstract}
We prove that if a quasi-tilted algebra is tame, then the associated moduli spaces are products of projective spaces. Together with an earlier result of Chindris this gives a geometric characterization of the tame quasi-tilted algebras. In proof we use knowledge of the representation theory of the tame quasi-tilted algebras and a construction of semi-invariants as determinants.
\end{abstract}

\maketitle

Throughout the article $\Bbbk$ is an algebraically closed field of characteristic $0$.

There is a well-known dichotomy for finite dimensional algebras due to Drozd~\cite{Drozd}: every algebra is either tame or wild, but not both. Here a finite dimensional algebra is called tame if, for each dimension $d$, the indecomposable $d$-dimensional modules form finitely many one-parameter families. On the other hand, an algebra $\Lambda$ is wild if the classification of $\Lambda$-modules is as difficult as the classification of the pairs of two (non-commuting) endomorphisms of a finite dimensional vector space (the latter problem is considered to be hopeless).

The above definitions of tame and wild algebras are of geometric nature. This encourages people to look for characterizations of representation type, which use properties of geometric objects associated to them, like for example modules varieties (some results of this type can be found in~\cites{BobinskiSkowronski, SkowronskiZwara}). In particular, Skowro\'nski and Weyman~\cite{SkowronskiWeyman} have proved that a hereditary algebra $\Lambda$ is tame if and only if all the corresponding rings of semi-invariants are complete intersections. Inspired by this result Chindris has initiated a programme, whose aim is to characterize representation type in terms of (rational) invariant theory~\cites{Chindris2009, Chindris2011, Chindris2013}. As a result of his studies he has formulated together with Carroll~\cite{CarrollChindris}, the following conjecture, which they have attributed to Weyman:

\begin{conjecture}
Let $\Lambda$ be an algebra. Then the following conditions are equivalent:
\begin{enumerate}

\item
$\Lambda$ is of tame representation type.

\item
For any dimension vector $\bd$, for any irreducible component $\calC$ of $\mod_\Lambda (\bd)$, and for any weight $\theta$ such that $\calC_\theta^{\sstab} \neq \varnothing$, $\calM (\calC)_\theta^{\sstab}$ is a product of projective spaces.

\end{enumerate}
\end{conjecture}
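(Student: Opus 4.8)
The plan is to prove the two implications separately, treating the equivalence as a bridge between the homological/combinatorial notion of tameness and the geometry of the GIT quotients $\calM(\calC)_\theta^{\sstab}$.

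For the implication $(1)\Rightarrow(2)$ I would argue as follows. Fix a component $\calC$, a weight $\theta$ with $\calC_\theta^{\sstab}\neq\varnothing$, and restrict attention to the dense open subset of $\calC$ whose points share a fixed generic decomposition into $\theta$-stable summands. Since S-equivalence collapses every semistable module to the direct sum of its stable composition factors, $\calM(\calC)_\theta^{\sstab}$ depends only on this generic stable datum: if the generic semistable module has stable factors $M_1^{\oplus a_1}\oplus\cdots\oplus M_r^{\oplus a_r}$ of equal $\theta$-slope, the quotient is assembled from the moduli of each isotypic block. The structural input, and the place where tameness must enter, is the claim that for a tame algebra the $\theta$-stable bricks occurring generically in a component are either rigid, hence isolated and contributing a point, or satisfy $\End\cong\Bbbk$ and $\Ext^1(M,M)\cong\Bbbk$, hence vary in a homogeneous one-parameter family indexed by $\bbP^1$. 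Granting this, an isotypic block drawn from such a family with multiplicity $a$ yields the symmetric power $\mathrm{Sym}^a(\bbP^1)\cong\bbP^a$, so $\calM(\calC)_\theta^{\sstab}$ factors as $\prod_i\bbP^{n_i}$. I would prove the one-parameter-family statement by reducing, through covering functors, degenerations, and the control imposed by the tame growth bound, to the behaviour of stable bricks, showing that a nonrigid stable brick is forced into a homogeneous tubular family.

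For the converse $(2)\Rightarrow(1)$ I would argue by contraposition: assuming $\Lambda$ is wild, produce a single component $\calC$ and weight $\theta$ for which $\calM(\calC)_\theta^{\sstab}$ is not a product of projective spaces. The strategy is to witness wildness by a representation embedding from the category of representations of a wild quiver (or a wild hereditary algebra), to select there a dimension vector and weight whose moduli of semistable representations is a projective variety that is \emph{not} such a product---detected by an invariant that every product of projective spaces must possess, for instance its Picard rank, its Poincar\'e polynomial computed through the Harder--Narasimhan/Reineke recursion, or smoothness together with rationality---and then to transport this moduli space along the embedding. Concretely I would locate the image of the embedding inside a single irreducible component $\calC$ of some $\mod_\Lambda(\bd)$, pull back the source weight to a compatible $\theta$, and verify that the induced map on GIT quotients is an isomorphism, or at least a surjection preserving the obstructing invariant.

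The main obstacle in both directions is the absence, for arbitrary tame algebras, of a classification comparable to the tilting-and-tube description available for quasi-tilted algebras. For $(1)\Rightarrow(2)$ the difficulty is to establish, uniformly and without recourse to a list of indecomposables, that the generic $\theta$-stable summands always occur either as isolated rigid bricks or in homogeneous $\bbP^1$-families; this statement is what must replace the explicit determinantal semi-invariants of the quasi-tilted case and is the genuine crux. For $(2)\Rightarrow(1)$ the difficulty is that embeddings witnessing wildness need not respect dimension vectors, irreducible components, or $\theta$-semistability, so transporting a non-product moduli space requires a carefully engineered functor---exact, preserving indecomposability and dimension vectors up to a linear substitution, and carrying a semistable locus onto a semistable locus---whose existence for \emph{every} wild algebra is itself the heart of the matter.
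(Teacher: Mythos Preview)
The fundamental gap is that you are attempting to prove a statement the paper itself declares to be \emph{false} in this generality. Immediately after stating the conjecture the paper writes: ``This conjecture has no chances to hold in such generality. Obvious counterexamples are local wild algebras. There is also a counterexample due to Ringel of a triangular \ldots\ wild algebra, such that all the associated moduli spaces are points.'' Hence your implication $(2)\Rightarrow(1)$ cannot possibly go through: for a local wild algebra every module variety $\mod_\Lambda(\bd)$ is an affine space with a single closed orbit, so $\Bbbk[\mod_\Lambda(\bd)]^{\GL(\bd)}=\Bbbk$ and every moduli space is a point, yet $\Lambda$ is wild. Your proposed transport of a ``bad'' moduli space along a representation embedding fails precisely for the reason you flag as a difficulty, but it is not merely difficult---it is impossible, because the conclusion is false. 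No amount of engineering of the functor will repair this.

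What the paper actually does is restrict to quasi-tilted $\Lambda$ and prove $(1)\Rightarrow(2)$ in that case (the converse for quasi-tilted algebras being attributed to Chindris via reduction to a wild concealed convex subalgebra). Even within $(1)\Rightarrow(2)$ your sketch diverges substantially from the paper's argument. You propose to decompose a generic semistable module into $\theta$-stable summands and identify each isotypic family with a symmetric power of $\bbP^1$. The paper instead works entirely on the side of semi-invariants: it uses the Crawley-Boevey--Schr\"oer generic decomposition of $\calC$ into indecomposable components $\calC(\bd_i)$, strips off orbit-closure summands via Lemma~\ref{lemma summand orbit}, then decomposes the weight $\theta$ generically via $\ol{\calP}_{\Lambda_\calC}(\bc_{\theta,\calC})$ and strips off dense-orbit pieces via Corollary~\ref{corollary dense}. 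The remaining $\bd_i$ and $\bc_j$ are forced to be isotropic, and a support-disjointness lemma (Lemma~\ref{lemma support}) reduces everything to $\calC=\mod_\Lambda(n\bh)$ for an isotropic Schur root $\bh$, where the result is read off from Domokos--Lenzing. Your claim that nonrigid $\theta$-stable bricks for a tame algebra lie in homogeneous $\bbP^1$-families is exactly the hard structural input that the paper replaces by these reductions plus the known representation theory of tame quasi-tilted algebras; you have not supplied a proof of it, and outside the quasi-tilted setting no such statement is available.
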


This conjecture has no chances to hold is such generality. Obvious counterexamples are local wild algebras. There is also a counterexample due to Ringel of a triangular (no cycles in the Gabriel quiver) wild algebra, such that all the associated moduli space are points.

The aim of this paper is to verify this conjecture for the quasi-tilted algebras. The implication $(2) \implies (1)$ has been proved for the tilted algebras by Chindris~\cite{Chindris2013}*{Propostion~4.1}. In fact, as it has been explained to me by Chindris, his proof of the implication $(2) \implies (1)$ generalizes to the quasi-tilted algebras. More precisely, in~\cite{Chindris2013} Chindris uses a result of Kerner~\cite{Kerner} stating that a wild tilted algebras has a convex subalgebra which is wild concealed. In the case of the quasi-tilted algebras one has to use results of Lenzing and Skowro\'nski~\cite{LenzingSkowronski} (every wild quasi-tilted algebra has a convex subalgebra which is wild almost concealed-canonical) and Meltzer~\cite{Meltzer} (every wild almost concealed-canonical algebra has a convex subcategory which is wild concealed). Thus in the paper we concentrate on the proof of the implication $(1) \implies (2)$.

The paper is organized as follows. In Section~\ref{section quivers} we recall basic facts about quivers and their representations. Section~\ref{section quasi tilted} is devoted to a short introduction of quasi-tilted algebras. Next in Sections~\ref{section varieties} and~\ref{section semiinvariants} we introduce module varieties and semi-invariants, respectively. Moreover, in Section~\ref{section semiinvariants} some reduction results for semi-invariants are obtained. Finally, in Section~\ref{section semiinvariants} we recall King's construction of moduli spaces and in Section~\ref{section final} we prove that the moduli spaces for the quasi-tilted algebras are products of projective spaces.

The paper was written during the author's stay at the University of Bielefeld, which was supported by CRC 701. The author also acknowledges the support of National Science Center Grant No.\ DEC-2011/03/B/ST1/00847.

\section{Quivers and their representations} \label{section quivers}

In this section we present facts about quivers and their representations, which we use in the paper. As a general background we suggest~\cites{AssemSimsonSkowronski, AuslanderReitenSmalo, Ringel}.

By a quiver $Q$ we mean a finite set $Q_0$ (called the set of vertices of $Q$) together with a finite set $Q_1$ (called the set of arrows of $Q$) and two maps $s, t : Q_1 \to Q_0$, which assign to each arrow $\alpha$ its starting vertex $s \alpha$ and its terminating vertex $t \alpha$, respectively. By a path of length $n \in \bbN_+$ in a quiver $Q$ we mean a sequence $\sigma = (\alpha_1, \ldots, \alpha_n)$ of arrows such that $s \alpha_i = t \alpha_{i + 1}$ for each $i \in [1, n - 1]$. In the above situation we put $\ell \sigma := n$, $s \sigma := s \alpha_n$ and $t \sigma := t \alpha_1$. We treat every arrow in $Q$ as a path of length $1$. Moreover, for each vertex $x$ we have a trivial path $\bone_x$ at $x$ such that $\ell \bone_x := 0$ and $s \bone_x := x =: t \bone_x$. For the rest of the paper we assume that the considered quivers do not have oriented cycles, where by an oriented cycle we mean a path $\sigma$ of positive length such that $s \sigma = t \sigma$.

Let $Q$ be a quiver. By the path algebra $\Bbbk Q$ of $Q$ we mean the vector space with a basis formed by the paths in $Q$ and the multiplication induced by the concatenation of paths. If $x$ and $y$ are vertices of $Q$, we put $\Bbbk Q (x, y) := \bone_y \Bbbk Q \bone_x$, i.e.\ $\Bbbk Q (x, y)$ is the space spanned by the paths with the starting vertex $x$ and the terminating vertex $y$. The $\Bbbk Q$-modules may be identified with the $\Bbbk$-representations of $Q$, where by a $\Bbbk$-representation of $Q$ we mean $V$ consisting of finite dimensional $\Bbbk$-vector spaces $V (x)$, $x \in Q_0$, and $\Bbbk$-linear maps $V (\alpha) : V (s \alpha) \to V (t \alpha)$. In particular, if $M$ is a $\Bbbk Q$-module and $V$ is the corresponding representation, then $V (x) := \bone_x M$ for each $x \in Q_0$. We will usually identify $\Bbbk Q$-modules with the corresponding representations of $\Bbbk$. If $V$ and $W$ are representations of a quiver $Q$, then a morphism $\varphi : V \to W$ is given by linear maps $\varphi (x) : V (x) \to W (x)$, $x \in Q_0$, such that $W (\alpha) \varphi (s \alpha) = \varphi (t \alpha) V (\alpha)$ for each $\alpha \in Q_1$. We denote the category of $\Bbbk$-representations of $Q$ by $\rep Q$. If $V$ is a representation, $x, y \in Q_0$, then one defines $V (\omega) : V (x) \to V (y)$ in an obvious way. Given a representation $V$ of $Q$ we denote by $\bdim V$ its dimension vector defined by the formula $(\bdim V) (x) := \dim_\Bbbk V (x)$, for $x \in Q_0$. Observe that $\bdim V \in \bbN^{Q_0}$ for each representation $V$ of $Q$. We call the elements of $\bbN^{Q_0}$ dimension vectors. If $\bd$ is a dimension vector, then we denote by $\supp \bd$ the subquiver of $Q$ induced by the vertices $x$ such that $\bd (x) \neq 0$. A dimension vector $\bd$ is called connected if the quiver $\supp \bd$ is connected. A dimension vector $\bd$ is called sincere if $\supp \bd = Q$. If $V$ is a representation of $Q$, then we also denote $\supp \bdim V$ by $\supp V$ and call it the support of $V$.

By a bound quiver $(Q, I)$ we mean a quiver $Q$ together with an ideal $I$ of $\Bbbk Q$ such that $I \subseteq (\Bbbk Q_1)^2$, where by $\Bbbk Q_1$ we denote the ideal of $\Bbbk Q$ generated by the arrows. Given a bound quiver $(Q, I)$ we call the algebra $\Bbbk Q / I$ the path algebra of $(Q, I)$. Note that if $(Q, I)$ is a bound quiver, then the $\Bbbk Q / I$-modules may be identified with the representations $V$ of $Q$ such that $V (\omega) = 0$ for each $\omega \in I \cap (\bigcup_{x, y \in Q_0} \Bbbk Q (x, y))$. If $\Lambda$ is the path algebra of a bound quiver $(Q, I)$, then we call $Q$ the Gabriel quiver of $\Lambda$. Gabriel proved that (up to isomorphism) $Q$ is uniquely determined by $\Lambda$. Moreover, Gabriel's Theorem implies that each quasi-tilted algebra is Morita equivalent to the path algebra of a bound quiver (we also need~\cite{HappelReitenSmalo}*{Proposition~III.1.1(b)} for this result). Thus from now on all considered algebras are the path algebras of bound quivers. Observe that if $J$ is an ideal in an algebra $\Lambda$, then the Gabriel quiver of $\Lambda / J$ is a subquiver of the Gabriel quiver of $\Lambda$ (here this is important that there are no oriented cycles in the considered quivers). If $(Q, I)$ is a bound quiver, then an algebra $\Lambda'$ is called a convex subalgebra of $\Bbbk Q / I$ if there exists a convex subquiver $Q'$ of $Q$ such that $\Lambda' = \Bbbk Q' / (I \cap \Bbbk Q')$.

Let $\Lambda$ be an algebra with the Gabriel quiver $Q$. For a vertex $x$ of $Q$ we put $P_\Lambda (x) := \Lambda \bone_x$. Then $P_\Lambda (x)$ is an indecomposable projective $\Lambda$-module and every indecomposable projective $\Lambda$-module is (up to isomorphism) of this form. If $V$ is a $\Lambda$-module, then $\Hom_\Lambda (P_\Lambda (x), V) = V (x)$. In particular, $\Hom_\Lambda (P_\Lambda (x), P_\Lambda (y)) = \Bbbk Q (y, x)$. Moreover, if $x, y \in Q_0$, $\omega \in \Bbbk (y, x)$ and $V$ is a $\Lambda$-module, then
\[
\Hom_\Lambda (\omega, V) : \Hom_\Lambda (P_\Lambda (y), V) \to \Hom_\Lambda (P_\Lambda (x), V)
\]
is just $V (\omega) : V (y) \to V (x)$.

For an algebra $\Lambda$ we denote by $\mod \Lambda$ the category of $\Lambda$-modules. Next, if $\Lambda$ is an algebra, then we denote by $\Du_\Lambda$ the duality $\mod \Lambda \to \mod \Lambda^{\op}$, where $\Lambda^{\op}$ is the opposite algebra of $\Lambda$, given by
\[
\Du_\Lambda (M) := \Hom_k (M, k) \qquad (M \in \mod \Lambda).
\]
Finally, for an algebra $\Lambda$ we denote by $\tau_\Lambda$ the corresponding Auslander--Reiten translation, which assigns to each $\Lambda$-module $M$ another $\Lambda$-module $\tau_\Lambda M$ (see~\cite{AssemSimsonSkowronski}*{Section~IV.2} for a definition). We will need the following consequence of the
Auslander--Reiten formula~\cite{AssemSimsonSkowronski}*{Theorem~IV.2.13}: if $M$ and $N$ are $\Lambda$-modules and $\pdim_\Lambda M \leq 1$, then
\begin{equation} \label{ARformula1}
\dim_\Bbbk \Ext_\Lambda^1 (M, N) = \dim_\Bbbk \Hom_\Lambda (N, \tau M).
\end{equation}

Let $\Lambda$ be an algebra with the Gabriel quiver $Q$. Since there are no cycles in $Q$, $\gldim \Lambda < \infty$. Consequently, we may define the bilinear form $\langle -, - \rangle_\Lambda : \bbZ^{Q_0} \times \bbZ^{Q_0} \to \bbZ$ by the condition:
\[
\langle \bdim M, \dim N \rangle_\Lambda = \sum_{i \in \bbN} \dim_k
\Ext_\Lambda^i (M, N)
\]
for all $\Lambda$-modules $M$ and $N$. We denote the corresponding quadratic form, called the Euler from, by $\chi_\Lambda$.

\section{Quasi-tilted algebras} \label{section quasi tilted}

A module $T$ over an algebra $\Lambda$ is called tilting if $\pdim_\Lambda T \leq 1$, $\Ext_\Lambda^1 (T, T) = 0$, and $T$ is a direct sum of $n$ pairwise non-isomorphic indecomposable $\Lambda$-modules, where $n$ is the number of vertices of the Gabriel quiver of $\Lambda$. By a tilted algebra we mean the opposite algebra of the endomorphism algebra of a tilting module over the the path algebra of a quiver.

An algebra $\Lambda$ is called quasi-tilted if $\Lambda$ is the opposite algebra of the endomorphism algebra of a tilting object in a connected hereditary abelian $\Bbbk$-category with finite dimensional homomorphism and extension spaces. Two prominent examples of quasi-tilted algebras are the following: the tilted algebras introduced above and the Ringel canonical algebras $\Lambda (\bm, \blambda)$, where $\bm = (m_1, \ldots, m_n)$, $n \geq 3$, is a sequence of integers greater than $1$ and $\blambda = (\lambda_3, \ldots, \lambda_n)$. In the above situation $\Lambda (\bm, \blambda)$ is the path algebra of the quiver
\[
\xymatrix@R=0.25\baselineskip@C=3\baselineskip{%
& \bullet \save*+!D{\scriptstyle (1, 1)} \restore
\ar[lddd]_{\alpha_{1, 1}} & \cdots \ar[l]^-{\alpha_{1, 2}} &
\bullet \save*+!D{\scriptstyle (1, m_1 - 1)} \restore
\ar[l]^-{\alpha_{1, m_1 - 1}}
\\ \\ %
& \bullet \save*+!D{\scriptstyle (2, 1)} \restore
\ar[ld]^{\alpha_{2, 1}} & \cdots \ar[l]^-{\alpha_{2, 2}} & \bullet
\save*+!D{\scriptstyle (2, m_2 - 1)} \restore \ar[l]^-{\alpha_{2,
m_2 - 1}}
\\ %
\bullet \save*+!R{\scriptstyle 0} \restore & \cdot & & \cdot &
\bullet \save*+!L{\scriptstyle \infty} \restore
\ar[luuu]_{\alpha_{1, m_1}} \ar[lu]^{\alpha_{2, m_2}}
\ar[lddd]^{\alpha_{n, m_n}}
\\ %
& \cdot & & \cdot
\\ %
& \cdot & & \cdot
\\ %
& \bullet \save*+!U{\scriptstyle (n, 1)} \restore
\ar[luuu]^{\alpha_{n, 1}} & \cdots \ar[l]_-{\alpha_{n, 2}} &
\bullet \save*+!U{\scriptstyle (n, m_n - 1)} \restore
\ar[l]_-{\alpha_{n, m_n - 1}} }
\]
modulo the ideal generated by the relations
\[
\alpha_{1, 1} \cdots \alpha_{1, m_1} + \lambda_i \alpha_{2, 1}
\cdots \alpha_{2, m_2} - \alpha_{i, 1} \cdots \alpha_{i, m_i}, \,
i \in [3, n].
\]
Due to Happel~\cite{Happel}*{Theorem~3.1} every quasi-tilted algebra is either a tilted algebra or is of canonical type (i.e.\ is derived equivalent to a canonical algebra).

A structure of module categories over tilted algebras has been investigated in~\cite{Kerner}, while a structure of module categories over quasi-tilted algebras of canonical type has been studied in~\cite{LenzingSkowronski}. We also refer to~\cite{Skowronski} for a characterization of tame quasi-tilted algebras and to~\cite{Ringel} for a description of module categories over so-called tubular algebras, which form an important subclass of tame quasi-tilted algebras. We list some consequences of these investigations.

Let $\Lambda$ be a tame quasi-tilted algebra with the Gabriel quiver $Q$. If $\bd$ is a dimension vector, then there exists an indecomposable $\Lambda$-module with dimension vector $\bd$ if and only if $\bd$ is a root of $\chi_\Lambda$, i.e.\ $\bd$ is a connected non-zero dimension vector such that $\chi_\Lambda (\bd) \in \{ 0, 1 \}$. We call a root $\bd$ isotropic if $\chi_\Lambda (\bd) = 0$. We call a root $\bd$ a Schur root if there exists a $\Lambda$-module $X$ (necessarily indecomposable) with dimension vector $\bd$ and trivial endomorphism algebra. If $X_1$ and $X_2$ are indecomposable $\Lambda$-modules such that $\bdim X_1$ and $\bdim X_2$ are isotropic roots with $\supp X_1 \cap \supp X_2 \neq \varnothing$, then either $\Hom_\Lambda (X_1, X_2) \neq 0$ or $\Hom_\Lambda (X_2, X_1) \neq \varnothing$ or $\bdim X_1$ and $\bdim X_2$ are multiplicities of the same isotropic Schur root.

Now assume that $\Lambda$ is a canonical algebra. The indecomposable $\Lambda$-modules can be divided into three classes $\calL_\Lambda$, $\calT_\Lambda$ and $\calR_\Lambda$: the class $\calL_\Lambda$ is formed by the indecomposable $\Lambda$-modules $X$ such that $\dim X (0) > \dim X (\infty)$, the class $\calT_\Lambda$ is formed by the indecomposable $\Lambda$-modules $X$ such that $\dim X (0) = \dim X (\infty)$, and the class $\calR_\lambda$ is formed by the indecomposable $\Lambda$-modules $X$ such that $\dim X (0) < \dim X (\infty)$. An algebra $\Lambda$ is called (almost) concealed-canonical if $\Lambda$ is the opposite algebra of the endomorphism algebra of a tilting module $T$, which is a direct sum of indecomposable modules from $\calL$ ($\calL \cup \calT$, respectively).

\section{Module varieties} \label{section varieties}

Let $\Lambda$ be the path algebra of a bound quiver $(Q, I)$. For a dimension vector $\bd$ we denote by $\mod_\Lambda (\bd)$ the set of representations $M$ of $(Q, I)$ (recall that we identify the $\Lambda$-modules with the representations of $(Q, I)$) such that $M (x) = \Bbbk^{\bd (x)}$ for each $x \in Q_0$. This set can be naturally identified with a closed subset of the affine space $\rep_Q (\bd) := \prod_{\alpha \in Q_1} \bbM (\bd (t \alpha), \bd (s \alpha))$, thus it has a structure of an affine variety (note that under this identification $\rep_Q (\bd) = \mod_{\Bbbk Q} (\bd)$). The reductive group $\GL (\bd) := \prod_{x \in Q_0} \GL (\bd (x))$ acts on $\mod_\Lambda (\bd)$ via:
\[
(g \ast M) (\alpha) := g (t \alpha) \cdot M (\alpha) \cdot g (s \alpha)^{-1} \qquad (g \in \GL (\bd), \; \alpha \in Q_1).
\]
If $M \in \mod_\Lambda (\bd)$, then we denote its orbit with respect to this action by $\calO (M)$. One has $\calO (M) = \calO (N)$ if and only if $M \simeq N$.

Let $\calC_1$ and $\calC_2$ be closed irreducible subsets of varieties $\mod_\Lambda (\bd_1)$ and $\mod_\Lambda (\bd_2)$, respectively. By $\calC_1 \oplus \calC_2$ we denote the closure of the set consisting of all $M \in \mod_\Lambda (\bd_1 + \bd_2)$ such that $M \simeq M_1 \oplus M_2$ for some $M_1 \in \calC_1$ and $M_2 \in \calC_2$. In the above situation we call $\calC_1$ and $\calC_2$ summands of $\calC$.

An irreducible component $\calC$ of $\mod_\Lambda (\bd)$ is called indecomposable if the indecomposable modules in $\calC$ form a dense subset of $\calC$. If $\calC$ is an irreducible component of $\mod_\Lambda (\bd)$, then there exist uniquely (up to ordering) determined indecomposable irreducible components $\calC_1$, \ldots, $\calC_n$ of $\calC$ such that
\[
\calC = \calC_1 \oplus \cdots \oplus \calC_n
\]
\cite{Crawley-BoeveySchroer}*{Theorem~1.1} (see also~\cite{delaPena}). We call the above presentation the generic decomposition of $\calC$. Moreover, if, for $i \in [1, n]$, $\calC_i \subseteq \mod_\Lambda (\bd_i)$, then we call $\bd_1$, \ldots, $\bd_n$ the generic summands of $\bd$ at $\calC$.

Now we present description of the indecomposable irreducible components in the case of the tame quasi-tilted algebras, which follows from~\cite{BobinskiSkowronski}. First, if $\bd$ is a dimension vector, then there is at most one indecomposable irreducible component of $\mod_\Lambda (\bd)$. Thus if it exists we denote it by $\calC (\bd)$. Moreover, there exists an indecomposable irreducible component of $\mod_\Lambda (\bd)$ if and only if $\bd$ is a Schur root. Moreover, if $\bd$ is not isotropic, then $\calC (\bd)$ is an orbit closure, i.e.\ there exists a $\Lambda$-module $M$ such that $\calC (\bd) = \ol{\calO (M)}$.

\section{Semi-invariants} \label{section semiinvariants}

Let $Q$ be a quiver, $\bd$ a dimension vector, and $\calC$ a $\GL (\bd)$-invariant closed subset of $\rep_Q (\bd)$. The action of $\GL (\bd)$ on $\calC$ induces an action on the coordinate ring $\Bbbk [\calC]$ via:
\[
(g \ast f) (M) := f (g^{-1} \ast M) \qquad (g \in \GL (\bd), \; f \in \Bbbk [\calC], \; M \in \calC).
\]
If $\calC$ is irreducible, then there is a unique closed orbit in $\calC$, that of the semi-simple module with dimension vector $\bd$, hence there are only trivial $\GL (\bd)$-regular functions on $\calC$, i.e.\ $\Bbbk [\calC]^{\GL (\bd)} = \Bbbk$. However, one may still have non-trivial semi-invariants. A regular function $f \in \Bbbk [\calC]$ is called a semi-invariant of weight $\theta \in \bbZ^{Q_0}$ if
\[
g \ast f = \chi^\theta (g) \cdot f
\]
for each $g \in \GL (\bd)$. Here $\chi^\theta : \GL (\bd) \to \Bbbk^\times$ is given by
\[
\chi^\theta (g) := \prod_{x \in Q_0} \det\nolimits^{\theta (\be_x)} (g (x)) \qquad (g \in \GL (\bd)),
\]
where $\be_x$, $x \in Q_0$, are the standard basis vectors of $\bbZ^{Q_0}$. We denote the space of semi-invariants of weight $\theta$ by $\Bbbk [\calC]_\theta$. One easily observes that $\theta (\bd) = 0$ provided $\Bbbk [\calC]_\theta \neq 0$.

We present a method of constructing semi-invariants, which in the case of quivers is due to Schofield~\cite{Schofield}, and has been generalized to the case of bound quivers independently by Derksen and Weyman~\cite{DerksenWeyman2002} and Domokos~\cite{Domokos} (we also refer to~\cites{DerksenWeyman2002, Domokos} for the proofs). Let $Q$ be a quiver and $\bd$ a dimension vector. Fix sequences $\bx = (x_1, \ldots, x_n)$ and $\by = (y_1, \ldots, y_m)$ of vertices of $Q$. Put
\[
\Bbbk Q (\bx, \by) := \prod_{\substack{i \in [1, n] \\ j \in [1, m]}} \Bbbk Q (x_i, y_j).
\]
If $\phi = (\phi_{i, j})_{\substack{i \in [1, n] \\ j \in [1, m]}} \in \Bbbk Q (\bx, \by)$ and $M \in \rep_Q (\bd)$ for a dimension vector $\bd$, then we obtain a map
\[
M (\phi) := [M (\phi_{i, j})]_{\substack{i \in [1, n] \\ j \in [1, m]}} : M (\bx) := \bigoplus_{i \in [1, n]} M (x_i) \to M (\by) := \bigoplus_{j \in [1, m]} M (y_j).
\]
If, in addition, $\sum_{i \in [1, n]} \bd (x_i) = \sum_{j \in [1, m]} \bd (y_j)$, then we may define a regular function $c_\bd^\phi : \rep_Q (\bd) \to \Bbbk$ by
\[
c_\bd^\phi (M) := \det M (\phi) \qquad (M \in \rep_Q (\bd)).
\]
Then $c_\bd^\phi$ is a semi-invariant of weight $\theta^\phi$, where
\[
\theta^\phi (\bc) := \sum_{i \in [1, n]} \bc (x_i) - \sum_{j \in [1, m]} \bc (y_j) \qquad (\bc \in \bbZ^{Q_0})
\]
(note that, in particular, $\theta^\phi (\bd) = 0$). If $\calC$ is a $\GL (\bd)$-invariant closed subset of $\rep_Q (\bd)$, then we denote the restriction $c_\bd^\phi |_\calC$ of $c_\bd^\phi$ to $\calC$ by $c_\calC^\phi$.

We list some obvious consequences (compare~\cite{DerksenWeyman2000}*{Lemma~1}).

\begin{lemma} \label{lemma direct sum}
Let $\bx$ and $\by$ be sequences of vertices of a quiver $Q$, $\phi \in \Bbbk Q (\bx, \by)$, $\bd$ a dimension vector such that $\theta^\phi (\bd) = 0$, and $M = M_1 \oplus M_2 \in \mod_\Lambda (\bd)$.
\begin{enumerate}

\item \label{point direct sum1}
If $\theta^\phi (\bdim M_1) \neq 0$ \textup{(}hence, equivalently, $\theta^\phi (\bdim M_2) \neq 0$\textup{)}, then $c_\bd^\phi (M) = 0$.

\item \label{point direct sum2}
If $\theta^\phi (\bdim M_1) = 0$ \textup{(}hence, equivalently, $\theta^\phi (\bdim M_2) = 0$\textup{)}, then $c_\bd^\phi (M) = c_{\bdim M_1}^\phi (M_1) \cdot c_{\bdim M_2}^\phi (M_2)$. \qed

\end{enumerate}
\end{lemma}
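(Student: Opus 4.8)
The plan is a direct computation with the block matrix $M(\phi)$. First I would fix, for every vertex $x$, a basis of $M(x) = \Bbbk^{\bd(x)}$ obtained by concatenating a basis of $M_1(x)$ with a basis of $M_2(x)$; then $M(x_i) = M_1(x_i) \oplus M_2(x_i)$ for each $i$, hence $M(\bx) = M_1(\bx) \oplus M_2(\bx)$, and similarly $M(\by) = M_1(\by) \oplus M_2(\by)$. Since each $\phi_{i,j} \in \Bbbk Q(x_i, y_j)$ is a linear combination of paths, and every path acts on $M_1 \oplus M_2$ as the direct sum of its actions on $M_1$ and on $M_2$, each block $M(\phi_{i,j})$ equals $M_1(\phi_{i,j}) \oplus M_2(\phi_{i,j})$. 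Regrouping the basis vectors of $M(\bx)$ and of $M(\by)$ according to which summand they come from then identifies $M(\phi)$, up to a reordering of basis vectors, with the block-diagonal map
\[
M_1(\phi) \oplus M_2(\phi) : M_1(\bx) \oplus M_2(\bx) \longrightarrow M_1(\by) \oplus M_2(\by).
\]

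The rest is bookkeeping with $\theta^\phi$ and determinants. Additivity of $\theta^\phi$ together with $\theta^\phi(\bd) = 0$ gives $\theta^\phi(\bdim M_1) + \theta^\phi(\bdim M_2) = 0$, so $\theta^\phi(\bdim M_1) = 0$ if and only if $\theta^\phi(\bdim M_2) = 0$ --- this is the ``equivalently'' in both parts. Moreover $\theta^\phi(\bdim M_i) = \dim_\Bbbk M_i(\bx) - \dim_\Bbbk M_i(\by)$, so $\theta^\phi(\bdim M_i)$ measures precisely the failure of the matrix $M_i(\phi)$ to be square. For part~\eqref{point direct sum1}: if $\theta^\phi(\bdim M_1) \neq 0$, then one of $M_1(\phi)$, $M_2(\phi)$ has strictly more columns than rows and thus a nonzero kernel; any vector in that kernel, viewed inside $M(\bx) = M_1(\bx) \oplus M_2(\bx)$, lies in the kernel of $M(\phi)$ because $M(\phi)$ respects the decomposition, so $M(\phi)$ is singular and $c_\bd^\phi(M) = \det M(\phi) = 0$. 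For part~\eqref{point direct sum2}: if $\theta^\phi(\bdim M_1) = 0$, then $M_1(\phi)$ and $M_2(\phi)$ are both square, so $c_{\bdim M_1}^\phi(M_1) = \det M_1(\phi)$ and $c_{\bdim M_2}^\phi(M_2) = \det M_2(\phi)$ are defined, and multiplicativity of the determinant over a block-diagonal matrix yields $c_\bd^\phi(M) = \det\bigl(M_1(\phi) \oplus M_2(\phi)\bigr) = \det M_1(\phi) \cdot \det M_2(\phi) = c_{\bdim M_1}^\phi(M_1) \cdot c_{\bdim M_2}^\phi(M_2)$.

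Everything here is routine linear algebra, so I do not expect a genuine obstacle; the only point that needs any care is the first step --- choosing the bases of the spaces $M(x)$ compatibly with the decomposition $M = M_1 \oplus M_2$ so that $M(\phi)$ becomes block-diagonal in the displayed form, the sign introduced by the regrouping being harmless in the two cases above.
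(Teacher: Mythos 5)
Your computation is exactly the argument the paper has in mind (the lemma is stated there without proof, as an ``obvious consequence'' of the definition of $c_\bd^\phi$), and part~\eqref{point direct sum1} as well as the equivalence $\theta^\phi(\bdim M_1)=0 \Leftrightarrow \theta^\phi(\bdim M_2)=0$ are fine. But the step you declare harmless is the only genuinely delicate one, and it cannot be waved away. Passing from the interleaved decomposition $M(\bx)=\bigoplus_i\bigl(M_1(x_i)\oplus M_2(x_i)\bigr)$ to the grouped one $M_1(\bx)\oplus M_2(\bx)$ is a permutation $\sigma$ of the basis, and similarly a permutation $\tau$ on the side of $M(\by)$; the determinant changes by $\operatorname{sgn}(\sigma)\operatorname{sgn}(\tau)=(-1)^e$ with
\[
e=\sum_{i<i'} (\bdim M_2)(x_i)\,(\bdim M_1)(x_{i'})+\sum_{j<j'} (\bdim M_2)(y_j)\,(\bdim M_1)(y_{j'}),
\]
and this need not be even. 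For instance, for the Kronecker quiver with arrows $\alpha,\beta\colon u\to v$ take $\bx=(u,u)$, $\by=(v)$, $\phi_{1,1}=\alpha$, $\phi_{2,1}=\beta$, and $M_1=M_2$ the representation of dimension vector $(1,2)$ with $M_1(\alpha)=\begin{pmatrix}1\\0\end{pmatrix}$ and $M_1(\beta)=\begin{pmatrix}0\\1\end{pmatrix}$. Then $c^\phi_{\bdim M_1}(M_1)=c^\phi_{\bdim M_2}(M_2)=1$, while for the block-diagonal point $M=M_1\oplus M_2$ the matrix $M(\phi)$ is the permutation matrix of a transposition, so $c^\phi_\bd(M)=-1$.

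So your argument actually proves part~\eqref{point direct sum2} only in the form $c_\bd^\phi(M)=\varepsilon\, c_{\bdim M_1}^\phi(M_1)\,c_{\bdim M_2}^\phi(M_2)$ with a sign $\varepsilon\in\{\pm1\}$ depending only on $\bx$, $\by$, $\bdim M_1$, $\bdim M_2$, and the example above shows that $\varepsilon=-1$ does occur, so the assertion that ``the sign is harmless'' for an exact equality is where the proof breaks (the literal equality in the statement suffers from the same defect). This does no damage downstream: every later use of the lemma (non-vanishing in Lemma~\ref{lemma summand}, producing $c_{\calC_1}^\phi$ as a nonzero scalar multiple of a restricted semi-invariant in Lemma~\ref{lemma summand orbit}) only needs the identity up to a nonzero scalar. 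But your write-up should either record the sign $\varepsilon$ explicitly (it is determined by the dimension vectors of the summands and the sequences $\bx$, $\by$) or weaken the conclusion of~\eqref{point direct sum2} to an equality up to such a scalar, rather than claim the regrouping sign disappears.
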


If we have sequences $\bx$, $\bx'$, $\by$ and $\by'$ of vertices of a quiver $Q$, $\phi \in \Bbbk (\bx, \by)$ and $\phi' \in \Bbbk (\bx', \by')$, then we may define the element $\phi \oplus \phi' \in \Bbbk Q (\bx \cdot \bx', \by \cdot \by')$ in the obvious way, where $\bx \cdot \bx'$ and $\by \cdot \by'$ are the concatenations of the respective sequences. Observe that
\[
M (\phi \oplus \phi') =
\begin{bmatrix}
M (\phi) & 0 \\ 0 & M (\phi')
\end{bmatrix}
: M (\bx) \oplus M (\bx') \to M (\by) \oplus M (\by')
\]
for each $M \in \rep_Q (\bd)$. Consequently, we get the following.

\begin{lemma} \label{lemma direct sum bis}
Let $\bx$, $\bx'$, $\by$ and $\by'$ be sequences of vertices of a quiver $Q$, $\phi \in \Bbbk (\bx, \by)$ and $\phi' \in \Bbbk (\bx', \by')$. If $\bd$ is a dimension vector and $\theta^\phi (\bd) = 0 = \theta^{\phi'} (\bd)$, then
\[
c_\bd^{\phi \oplus \phi'} = c_\bd^\phi \cdot c_\bd^{\phi'}.
\]
In particular, $c_\bd^{\phi \oplus \phi'}$ is a semi-invariant of weight $\theta^\phi + \theta^{\phi'}$.  \qed
\end{lemma}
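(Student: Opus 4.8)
The plan is to check the identity $c_\bd^{\phi \oplus \phi'} = c_\bd^\phi \cdot c_\bd^{\phi'}$ pointwise on $\rep_Q (\bd)$ and then read off the weight. Fix $M \in \rep_Q (\bd)$. Under the canonical identifications $M (\bx \cdot \bx') = M (\bx) \oplus M (\bx')$ and $M (\by \cdot \by') = M (\by) \oplus M (\by')$ the map $M (\phi \oplus \phi')$ is precisely the block-diagonal matrix displayed just before the lemma, so it suffices to invoke multiplicativity of the determinant: $\det M (\phi \oplus \phi') = \det M (\phi) \cdot \det M (\phi')$. Here the hypotheses $\theta^\phi (\bd) = 0$ and $\theta^{\phi'} (\bd) = 0$ are exactly what guarantees $\dim_\Bbbk M (\bx) = \dim_\Bbbk M (\by)$ and $\dim_\Bbbk M (\bx') = \dim_\Bbbk M (\by')$, hence that $M (\phi)$, $M (\phi')$ and $M (\phi \oplus \phi')$ are all square and that the three determinants are defined (and also $\theta^{\phi \oplus \phi'} (\bd) = 0$). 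Since $M$ was arbitrary, $c_\bd^{\phi \oplus \phi'} = c_\bd^\phi \cdot c_\bd^{\phi'}$ as regular functions on $\rep_Q (\bd)$.

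For the weight, writing $\bx = (x_1, \ldots, x_n)$, $\bx' = (x_1', \ldots, x_{n'}')$, $\by = (y_1, \ldots, y_m)$ and $\by' = (y_1', \ldots, y_{m'}')$, the defining formula for $\theta^{(-)}$ gives, for every $\bc \in \bbZ^{Q_0}$,
\[
\theta^{\phi \oplus \phi'} (\bc) = \sum_{i = 1}^{n} \bc (x_i) + \sum_{i = 1}^{n'} \bc (x_i') - \sum_{j = 1}^{m} \bc (y_j) - \sum_{j = 1}^{m'} \bc (y_j') = \theta^\phi (\bc) + \theta^{\phi'} (\bc),
\]
since the concatenation of the index sequences merely adds the two partial sums. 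Thus $c_\bd^{\phi \oplus \phi'}$, which is a semi-invariant of weight $\theta^{\phi \oplus \phi'}$ by the general construction applied to the pair $(\bx \cdot \bx', \by \cdot \by')$, is a semi-invariant of weight $\theta^\phi + \theta^{\phi'}$. Equivalently, this last point already follows from the factorization proved above, because $g \ast (c_\bd^\phi \cdot c_\bd^{\phi'}) = \chi^{\theta^\phi} (g) \chi^{\theta^{\phi'}} (g) \cdot c_\bd^\phi \cdot c_\bd^{\phi'} = \chi^{\theta^\phi + \theta^{\phi'}} (g) \cdot (c_\bd^\phi \cdot c_\bd^{\phi'})$.

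I do not expect any genuine obstacle here: the entire content sits in the block-diagonal shape of $M (\phi \oplus \phi')$ recorded before the statement, and the only role of the hypothesis on $\bd$ is to keep all three matrices square so that the determinants make sense. Accordingly I would present the argument as the one-line consequence of that display indicated by the ``\qed'' in the statement, spelling out only the elementary bookkeeping for the weight.
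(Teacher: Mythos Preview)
Your proof is correct and follows exactly the approach the paper intends: the lemma is stated with a \qed and no proof, as it is an immediate consequence of the block-diagonal display of $M(\phi \oplus \phi')$ given just before it, together with multiplicativity of the determinant on block-diagonal square matrices. Your remarks on why the hypothesis $\theta^\phi(\bd)=0=\theta^{\phi'}(\bd)$ is needed and the bookkeeping for the weight are accurate elaborations of what the paper leaves implicit.
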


We can interpret the above construction using projective presentations. Let $\Lambda$ be a factor algebra of $\Bbbk Q$ for a quiver $Q$. As above let $\bx= (x_1, \ldots, x_n)$ and $\by = (y_1, \ldots, y_m)$ be sequences of vertices of a quiver $Q$, and $\phi \in \Bbbk Q (\bx, \by)$. If we put
\[
P_\Lambda (\bx) := \bigoplus_{i \in [1, n]} P_\Lambda (x_i) \qquad \text{and} \qquad P_\Lambda (\by) := \bigoplus_{j \in [1, m]} P_\Lambda (y_j),
\]
then we may view $\phi$ as a map $P_\Lambda (\by) \to P_\Lambda (\bx)$ (note that every map between projective $\Lambda$-modules is of this form some $\bx$, $\by$ and $\phi$). Observe that
\[
\theta^\phi (\bdim M) = \dim_{\Bbbk} \Hom_\Lambda (P_\Lambda (\bx), M) - \dim_{\Bbbk} \Hom_\Lambda (P_\Lambda (\by), M)
\]
for each $\Lambda$-module $M$. If $M \in \mod_\Lambda (\bd)$ for a dimension vector $\bd$, then $M (\phi)$ may be identified with the induced map
\[
\Hom_\Lambda (\phi, M) : \Hom_\Lambda (P_\Lambda (\bx), M) \to
\Hom_\Lambda (P_\Lambda (\by), M).
\]
This implies in particular, that if $\theta^\phi (\bd) = 0$, then $c_\bd^\phi (M) \neq 0$ if and only if $\Hom_\Lambda (\Coker \phi, M) = 0$. In order to associate a semi-invariant with $\Coker \phi$ (independently of $\phi$) we make some additional assumptions, which simplify the presentation.

Let $\Lambda$ be an algebra with the Gabriel quiver $Q$. Moreover, let $V$ be a $\Lambda$-module with projective dimension at most $1$. If $\phi : Q \to P$ is a projective presentation of $V$ such that $\phi$ is a monomorphism, then $\theta^\phi = \langle \bdim V, - \rangle_\Lambda$, hence is independent on $\phi$. We denote this weight by $\theta^V$. Moreover, if $\theta^V (\bd) = 0$, and $\phi$ and $\phi'$ are projective presentations of $V$ such that $\phi$ and $\phi'$ are monomorphisms, then $c_{\mod_\Lambda (\bd)}^\phi$ and $c_{\mod_\Lambda (\bd)}^{\phi'}$ coincide up to a non-zero scalar. Thus we may define $c_{\Lambda, \bd}^V \in \Bbbk [\mod_\Lambda (\bd)]$ by $c_{\Lambda, \bd}^V := c_{\mod_\Lambda (\bd)}^\phi$, where $\phi$ is a chosen projective presentation of $V$. Then $c_{\Lambda, \bd}^V$ is a semi-invariant of weight $\theta^V$ and $c_{\Lambda, \bd}^V (M) = 0$ if and only if $\Hom_\Lambda (V, M) = 0$ or, equivalently, $\Hom_\Lambda (M, \tau_\Lambda V) = 0$ (for the latter statement we need~\eqref{ARformula1}). If $\calC$ is a closed $\GL (\bd)$-invariant subset of $\mod_\Lambda (\bd)$, then we denote by $c_{\Lambda, \calC}^V$ the restriction of $c_{\Lambda, \bd}^V$ to $\calC$.

It will be often useful to associate semi-invariants to modules of projective dimension at most 1 in a ``regular'' way. Thus assume in addition that $\bc$ is a dimension vector such that $\calP_\Lambda (\bc) \neq \varnothing$, where $\calP_\Lambda (\bc)$ is the subset of $\mod_\Lambda (\bc)$ consisting of the modules with projective dimension at most $1$. Let $\bx$ be a sequence of vertices of $Q$ such that $P_\Lambda (\bx) = \bigoplus_{x \in Q_0} P_\Lambda (x)^{\bc (x)}$. Since there exist an epimorphism $P_\Lambda (\bx) \twoheadrightarrow V$ for each $\Lambda$-module $V$ with dimension vector $\bc$ and $\calP_\Lambda (\bc) \neq \varnothing$, there exists a sequence $\by$ of vertices of $Q$ such that $\bdim P_\Lambda (\bx) - \bdim P_\Lambda (\by) = \bc$. Let $\calX_\Lambda (\bc) \subseteq \Bbbk Q (\bx, \by)$ be the set of monomomorphisms $P_\Lambda (\by) \hookrightarrow P_\Lambda (\bx)$. Obviously, if $\phi \in \calX_\Lambda (\bc)$, then $\Coker \phi \in \calP_\Lambda (\bc)$. On the other hand, if $V \in \calP_\Lambda (\bc)$, then there exists $\phi \in \calX_\Lambda (\bd)$ such that $\Coker \phi \simeq V$. In fact we have even more.

\begin{lemma} \label{lemma open}
Let $\Lambda$ be an algebra and $\bc$ a dimension vector. If $\calV$ is a non-empty open subset of $\calP_\Lambda (\bc)$ and
\[
\calU := \{ \phi \in \calX_\Lambda (\bc) : \text{$\Coker \phi \in \calV$} \},
\]
then $\calU$ is a non-empty open subset of $\calX_\Lambda (\bc)$.
\end{lemma}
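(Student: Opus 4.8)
The plan is to realize $\calU$ as the preimage of $\calV$ under a suitable morphism of varieties, so that openness follows once we know that cokernels move in families. First I would fix the combinatorial data: the sequence $\bx$ with $P_\Lambda(\bx) = \bigoplus_{x \in Q_0} P_\Lambda(x)^{\bc(x)}$ and the sequence $\by$ with $\bdim P_\Lambda(\bx) - \bdim P_\Lambda(\by) = \bc$, so that every $\phi \in \calX_\Lambda(\bc)$ is a monomorphism $P_\Lambda(\by) \hookrightarrow P_\Lambda(\bx)$ with $\Coker\phi \in \calP_\Lambda(\bc)$ of dimension vector $\bc$. Since all the $\Coker\phi$ have the same dimension vector $\bc$, I want to produce a single morphism $\pi \colon \calX_\Lambda(\bc) \to \mod_\Lambda(\bc)$ with $\Coker\phi \simeq \pi(\phi)$ for all $\phi$, and then show $\calU = \pi^{-1}(\calV')$ for an open set $\calV'$ built from $\calV$; because $\pi$ is a morphism, $\pi^{-1}$ of an open set is open, and non-emptiness of $\calU$ is exactly the last sentence before the lemma.

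The construction of $\pi$ is the technical heart. For $\phi \in \calX_\Lambda(\bc)$ the cokernel $\Coker\phi$ is a module of dimension vector $\bc$, but its underlying vector spaces $(\Coker\phi)(x)$ are quotients of the fixed spaces $P_\Lambda(\bx)(x)$, not literally equal to $\Bbbk^{\bc(x)}$. To get an honest point of $\mod_\Lambda(\bc)$ one must choose splittings. The clean way is to pass to a principal bundle: let $\widetilde{\calX}$ be the variety of pairs $(\phi, \psi)$ where $\phi \in \calX_\Lambda(\bc)$ and $\psi \colon P_\Lambda(\bx) \to \Bbbk^{\bc}$ is a tuple of surjections, one at each vertex, whose kernel at $x$ is $(\operatorname{im}\phi)(x)$ — equivalently $\psi$ identifies $\Coker\phi$ with a point $M_{\phi,\psi} \in \mod_\Lambda(\bc)$. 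The assignment $(\phi,\psi) \mapsto M_{\phi,\psi}$ is a morphism $\widetilde\pi \colon \widetilde\calX \to \mod_\Lambda(\bc)$ (the module structure maps on the cokernel are obtained by solving linear equations with the chosen splittings, which is algebraic), and the first projection $p \colon \widetilde\calX \to \calX_\Lambda(\bc)$ is a $\GL(\bc)$-torsor, in particular surjective, open, and flat. Now set $\calU' := \widetilde\pi^{-1}(\calV) \subseteq \widetilde\calX$, which is open; then $\calU = p(\calU')$ is open because $p$ is an open map, and $\calU$ is non-empty because every $V \in \calV$ is $\Coker\phi$ for some $\phi$, which then lifts to $\widetilde\calX$. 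One still must check that $\calU$ is well-defined this way, i.e.\ that whether $\Coker\phi$ lies in $\calV$ does not depend on the choice of $\psi$: this is immediate since $\calV$ is $\GL(\bc)$-invariant (it is an open subset of the $\GL(\bc)$-variety $\calP_\Lambda(\bc)$) and different choices of $\psi$ give isomorphic, hence $\GL(\bc)$-conjugate, points $M_{\phi,\psi}$.

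I expect the main obstacle to be making the morphism $\widetilde\pi$ genuinely algebraic and verifying that $p$ is an open map. For the former, the point is that on $\widetilde\calX$ the kernel $\operatorname{im}\phi$ has locally constant dimension (it equals $\bdim P_\Lambda(\by)$ by injectivity of $\phi$), so the cokernel identifications vary algebraically — one can phrase this via a relative version of ``choosing a complement'', or more safely restrict to the open cover of $\widetilde\calX$ where a fixed set of coordinates of $\psi$ is invertible, on each piece of which the formulas for $M_{\phi,\psi}$ are explicit rational functions with non-vanishing denominators. For the latter, $p$ being a $\GL(\bc)$-torsor (in particular faithfully flat of finite type, or just a locally trivial fibration in the Zariski topology once one trivializes over the same open cover) gives openness. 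A more pedestrian alternative that avoids torsors entirely: work directly on $\calX_\Lambda(\bc)$, observe that the function $\phi \mapsto \dim_\Bbbk \Hom_\Lambda(\Coker\phi, N)$ is upper semicontinuous for each fixed test module $N$ (since $\Hom_\Lambda(\Coker\phi, N) = \ker\bigl(\Hom_\Lambda(\phi,N)\bigr)$ and $\Hom_\Lambda(\phi,N)$ depends linearly on $\phi$), and use that an open subset $\calV$ of $\calP_\Lambda(\bc)$ can be described, at least locally, by conditions of the form ``$\dim\Hom(-,N) \le k$'' together with the generic-decomposition constraints; but this is more delicate, so I would favor the torsor argument as the clean route, invoking standard facts about quotients of reductive group actions on affine varieties.
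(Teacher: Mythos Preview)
Your approach is essentially the paper's: your $\widetilde{\calX}$ is exactly the paper's $\calZ_\Lambda'(\bc)$ (pairs $(\phi,\psi)$ with $\phi$ a monomorphism and $\psi$ a surjection killing $\operatorname{im}\phi$), your $\widetilde\pi$ is the map the paper calls $\Theta$, and your $p$ is the paper's $\pi'$, which the paper likewise notes is locally trivial with fibre $\GL(\bc)$ and hence open. One small correction: an open subset of a $\GL(\bc)$-variety need not be $\GL(\bc)$-invariant, so your parenthetical justification of $p(\calU')=\calU$ via invariance of $\calV$ is not valid; but no invariance is needed, since as $\psi$ ranges over all admissible choices the points $M_{\phi,\psi}$ sweep out the entire $\GL(\bc)$-orbit of $\Coker\phi$, so $\phi\in p(\calU')$ iff that orbit meets $\calV$, which is precisely the intended meaning of ``$\Coker\phi\in\calV$''.
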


\begin{proof}
Let $\calY_\Lambda (\bc)$ be the set of $\psi = (\psi (x))_{x \in Q_0}$ such that, for each $x \in Q_0$, $\psi (x) : P_\Lambda (\bc) (x) \to \Bbbk^{\bc (x)}$ is a linear map. We denote by $\calZ_\Lambda (\bc)$ the set of pairs $(\phi, \psi)$ such that $\phi \in \calX_\Lambda (\bc)$, $\psi \in \calY_\Lambda (\bc)$ and $\psi \circ \phi = 0$. If $\pi : \calZ_\Lambda (\bc) \to \calX_\Lambda (\bc)$ is the canonical projection, then $\pi$ is a vector bundle. Consequently, if $\calZ_\Lambda' (\bc)$ is the set of pairs $(\phi, \psi) \in \calZ_\Lambda (\bc)$ such that $\psi$ is a surjection and $\pi'$ is the restriction of $\pi$ to
$\calZ_\Lambda' (\bc)$, then $\pi'$ is locally trivial (with fibre isomorphic to $\GL (\bc)$). In particular, if $\calW$ is an open subset of $\calZ_\Lambda' (\bc)$, then $\pi' (\calW)$ is an open subset of $\calX_\Lambda (\bc)$.

There exists a regular map $\Theta : \calZ_\Lambda' (\bc) \to \mod_\Lambda (\bc)$ such that $\Theta (\phi, \psi) \simeq \Coker \phi$ for all $(\phi, \psi) \in \calZ_\Lambda' (\bc)$ (the proof is analogous to the proof of~\cite{Richmond}*{Lemma~9}, hence we omit it). Since $\calU = \pi (\Theta^{-1} (\calV))$, the claim follows.
\end{proof}

We will also need the following.

\begin{lemma} \label{lemma generate}
Let $\Lambda$ be an algebra, $\bd$ a dimension vector, $\calC$ a $\GL (\bd)$-invariant irreducible closed subset of $\mod_\Lambda (\bd)$, and $\bc$ a dimension vector such that $\calP_\Lambda (\bc) \neq \varnothing$.
\begin{enumerate}

\item \label{point generate1}
If $\calU$ is a non-empty open subset of $\calX_\Lambda (\bc)$, then
\[
\spn \{ c_\calC^\phi \mid \text{$\phi \in \calU$} \} = \spn \{ c_\calC^\phi \mid \text{$\phi \in \calX_\Lambda (\bc)$} \}.
\]

\item \label{point generate2}
If $\calV$ is a non-empty open subset of $\calP_\Lambda (\bc)$, then
\[
\spn \{ c_{\Lambda, \calC}^V \mid \text{$V \in \calV$} \} = \spn \{ c_{\Lambda, \calC}^V \mid \text{$V \in \calP_\Lambda (\bc)$}
\}.
\]

\end{enumerate}
\end{lemma}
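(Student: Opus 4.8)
The plan is to prove both parts by the same device: exhibit a suitable family of semi-invariants over the base $\calX_\Lambda(\bc)$ (respectively $\calP_\Lambda(\bc)$), apply a standard "generic linear independence" argument, and then invoke irreducibility of the base together with the fact that the restriction map to $\calC$ is algebraic in the parameters.

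For part~\eqref{point generate1}, first I would fix a finite spanning set $c_\calC^{\phi_1}, \dots, c_\calC^{\phi_r}$ of $\spn\{ c_\calC^\phi \mid \phi \in \calX_\Lambda(\bc)\}$; such a finite set exists because each $c_\calC^\phi$ is a semi-invariant of the fixed weight $\theta^\phi = -\langle\bc,-\rangle_\Lambda$ (constant on $\calX_\Lambda(\bc)$, since $\bdim\Coker\phi = \bc$ for all $\phi$) and the space of semi-invariants of a given weight is finite dimensional. It suffices to show each $c_\calC^{\phi_i}$ lies in $\spn\{c_\calC^\phi \mid \phi \in \calU\}$. The key observation is that $c_\calC^\phi$, as $\phi$ ranges over $\calX_\Lambda(\bc) \subseteq \Bbbk Q(\bx,\by)$, depends polynomially on $\phi$ (the matrix entries of $M(\phi)$ are bilinear in $\phi$ and $M$, so $\det M(\phi)$ is polynomial in both). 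Hence $\phi \mapsto c_\calC^\phi \in \Bbbk[\calC]_{\theta^\phi}$ is a morphism from the irreducible variety $\calX_\Lambda(\bc)$ to the finite-dimensional vector space $\Bbbk[\calC]_{\theta}$, whose image spans a subspace $S$. Choosing $\phi_1', \dots, \phi_s' \in \calX_\Lambda(\bc)$ with $c_\calC^{\phi_1'}, \dots, c_\calC^{\phi_s'}$ a basis of $S$, the condition that $c_\calC^{\phi_1'}, \dots, c_\calC^{\phi_s'}$ remain linearly independent when restricted to finitely many test points of $\calC$ is an open dense condition on $(\phi_1', \dots, \phi_s')$, and more to the point the locus of $\phi \in \calX_\Lambda(\bc)$ for which $c_\calC^\phi \notin S'$ fails for a given proper subspace $S'$ is open; running this over a chain of subspaces one sees that there is a non-empty open $\calU_0 \subseteq \calX_\Lambda(\bc)$ such that any $s$ points of $\calU_0$ in "general position" give $c_\calC^{\phi}$'s spanning $S$. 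Intersecting with $\calU$ (non-empty open, hence dense, in the irreducible $\calX_\Lambda(\bc)$) keeps this non-empty, so $\spn\{c_\calC^\phi \mid \phi\in\calU\} = S = \spn\{c_\calC^\phi\mid\phi\in\calX_\Lambda(\bc)\}$.

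For part~\eqref{point generate2}, I would reduce to part~\eqref{point generate1} using Lemma~\ref{lemma open}. Given a non-empty open $\calV \subseteq \calP_\Lambda(\bc)$, set $\calU := \{\phi \in \calX_\Lambda(\bc) : \Coker\phi \in \calV\}$, which by Lemma~\ref{lemma open} is a non-empty open subset of $\calX_\Lambda(\bc)$. By definition of $c_{\Lambda,\calC}^V$ we have, for $\phi \in \calX_\Lambda(\bc)$, that $c_\calC^\phi$ is a non-zero scalar multiple of $c_{\Lambda,\calC}^{\Coker\phi}$ (this is exactly the well-definedness statement recalled before Lemma~\ref{lemma open}); consequently $\spn\{c_{\Lambda,\calC}^V \mid V\in\calV\} = \spn\{c_\calC^\phi \mid \phi\in\calU\}$ and likewise $\spn\{c_{\Lambda,\calC}^V \mid V\in\calP_\Lambda(\bc)\} = \spn\{c_\calC^\phi \mid \phi\in\calX_\Lambda(\bc)\}$ (using surjectivity of $\Coker : \calX_\Lambda(\bc)\to\calP_\Lambda(\bc)$, recalled just before Lemma~\ref{lemma open}). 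Now apply part~\eqref{point generate1}.

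The main obstacle I anticipate is the irreducibility of $\calX_\Lambda(\bc)$, which the argument above uses crucially so that a non-empty open set is dense. Here one should note that $\calX_\Lambda(\bc)$ is a non-empty open subset of the affine (hence irreducible) space $\Bbbk Q(\bx,\by)$: indeed "$\phi$ is a monomorphism $P_\Lambda(\by)\hookrightarrow P_\Lambda(\bx)$" is an open condition (injectivity of a map of vector bundles of appropriate ranks is the non-vanishing of appropriate minors), and it is non-empty precisely because $\calP_\Lambda(\bc) \neq \varnothing$ by hypothesis. With that in hand the rest is the routine semicontinuity/genericity bookkeeping sketched above; the only mild care needed is to phrase "generic spanning" correctly, e.g.\ by observing that for each proper subspace $S' \subsetneq S$ the set $\{\phi : c_\calC^\phi \in S'\}$ is Zariski closed and proper in $\calX_\Lambda(\bc)$, so a finite union of such, together with a few ``test-point'' conditions, still misses a non-empty open set meeting $\calU$.
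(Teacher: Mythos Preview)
Your proposal is correct and follows essentially the same route as the paper: reduce~\eqref{point generate2} to~\eqref{point generate1} via Lemma~\ref{lemma open}, then exploit that $\phi \mapsto c_\calC^\phi$ is regular and that $\calX_\Lambda(\bc)$ (being non-empty open in the affine space $\Bbbk Q(\bx,\by)$) is irreducible. The paper's execution of the ``generic spanning'' step is simply a cleaner packaging of your sketch: it fixes test points $M_1,\ldots,M_m\in\calC$ at which a chosen basis $c_\calC^{\phi_1},\ldots,c_\calC^{\phi_m}$ evaluates to an invertible matrix, and then observes that $(\psi_1,\ldots,\psi_m)\mapsto\det[c_\calC^{\psi_i}(M_j)]$ is a nonzero regular function on the irreducible variety $\calX_\Lambda(\bc)^m$, hence does not vanish identically on the dense open subset $\calU^m$ --- this replaces your ``chain of subspaces'' argument (which as stated is slightly imprecise: the spanning condition lives naturally on the product, not on a single open $\calU_0\subseteq\calX_\Lambda(\bc)$). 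A minor slip: the common weight is $\theta^\phi=\langle\bc,-\rangle_\Lambda$, not its negative.
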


\begin{proof}
Using Lemma~\ref{lemma open}, it is sufficient to prove the first assertion. Let $\phi_1, \ldots, \phi_m \in \calX_\Lambda (\bc)$ be such that $c_{\calC}^{\phi_1}$, \ldots, $c_\calC^{\phi_m}$ form a basis of $\spn \{ c_\calC^\phi \mid \text{$\phi \in \calX_\Lambda (\bc)$} \}$. There exist $M_1, \ldots, M_m \in \calC$ such that
\[
\det [c_{\calC}^{\phi_i} (M_j)]_{1 \leq i, j \leq m} \neq 0.
\]
It suffices to show there exist $\psi_1, \ldots, \psi_m \in \calU$ such that
\[
\det [c_{\calC}^{\psi_i} (M_j)]_{1 \leq i, j \leq m} \neq 0.
\]
However, the regular function
\[
\phi : \calX_\Lambda (\bc)^m \to \Bbbk, \; (\psi_1, \ldots, \psi_m) \mapsto \det [c_{\calC}^{\psi_i} (M_j)]_{1 \leq i, j \leq
m},
\]
is not a zero function, hence the claim follows.
\end{proof}

Now we use the above construction to describe generating sets of semi-invariants. We present two such sets. Depending on the situation, this will be more convenient to use one of them.

Let $\Lambda$ be an algebra with the Gabriel quiver $Q$, $\bd$ a dimension vector, $\calC$ an irreducible component of $\mod_\Lambda (\bd)$, and $\theta$ a weight such that $\SI [\calC]_\theta \neq 0$. There exists unique $\bc_\theta \in \bbZ^{Q_0}$ such that $\theta = \langle \bc_\theta, - \rangle_{\Bbbk Q}$. Since $\SI [\rep_Q (\bd)]_\theta \neq 0$, we may assume that $\bc_\theta$ is a dimension vector. We explain this more precisely.

If $\theta'$ and $\theta''$ are weights, then $\SI [\rep_Q (\bd)]_{\theta'}$ and $\SI [\rep_Q (\bd)]_{\theta''}$ are equal and both non-zero if and only if $\theta'$ and $\theta''$ are $\bd$-equivalent, i.e.\ $\theta' (x) = \theta'' (x)$ for all $x \in (\supp \bd)_0$. Now \cite{DerksenWeyman2000}*{Theorem~1} (see also~\cite{SchofieldvandenBergh}*{Theorem~2.3}) implies that there exists a dimension vector $\bc$ such that the weights $\theta$ and $\langle \bc, - \rangle_{\Bbbk Q}$ are $\bd$-equivalent. Consequently, we may assume that we only consider weights of this form.

It clear that $\calP_{\Bbbk Q} (\bc_\theta) \neq \varnothing$ (the category $\rep Q$ is hereditary), hence also $\calX_{\Bbbk Q} (\bc_\theta) \neq \varnothing$. It follows from~\cite{Chindris2009}*{Corollary~2.5} that the semi-invariants $c_\bd^\phi$, $\phi \in \calX_{\Bbbk Q} (\bc_\theta)$, span $\SI [\rep_Q (\bd)]_\theta$. Since $\calC$ is a closed $\GL (\bd)$-invariant subset of $\rep_Q (\bd)$ and $\chr \Bbbk = 0$, it follows that the semi-invariants $c_\calC^\phi$, $\phi \in \calX_{\Bbbk Q} (\bc_\theta)$, span $\Bbbk [\calC]_\theta$.

We list some consequences.

\begin{lemma} \label{lemma summand}
Let $\Lambda$ be an algebra, $\bd$ a dimension vector, $\calC$ an irreducible component of $\mod_\Lambda (\bd)$, and $\theta$ a weight such that $\SI [\calC]_\theta \neq 0$. If $\calC'$ is a summand of $\calC$, then $\SI [\calC']_{\theta} \neq 0$.
\end{lemma}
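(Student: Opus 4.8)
The plan is to produce, explicitly, a nonzero determinantal semi-invariant of weight $\theta$ on $\calC'$. Write the summand relation as $\calC = \calC' \oplus \calC''$, with $\calC' \subseteq \mod_\Lambda(\bd')$ and $\calC'' \subseteq \mod_\Lambda(\bd'')$, so that $\bd = \bd' + \bd''$. Since $\SI[\calC]_\theta \neq 0$ and, by the discussion preceding the lemma, the semi-invariants $c_\calC^\phi$ with $\phi \in \calX_{\Bbbk Q}(\bc_\theta)$ span $\SI[\calC]_\theta$, I would first fix $\phi \in \calX_{\Bbbk Q}(\bc_\theta)$ with $c_\calC^\phi \neq 0$. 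For such a $\phi$ one has $\bdim \Coker\phi = \bc_\theta$, hence $\theta^\phi = \langle \bc_\theta, -\rangle_{\Bbbk Q} = \theta$ exactly (not merely up to $\bd$-equivalence); this uses non-degeneracy of the Euler form of $\Bbbk Q$, and it is the one normalization point worth checking carefully.

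Next I would transfer the non-vanishing of $c_\calC^\phi$ to a genuinely decomposable module. By definition $\calC = \calC' \oplus \calC''$ is the closure of the set $S$ of modules in $\mod_\Lambda(\bd)$ isomorphic to $M' \oplus M''$ with $M' \in \calC'$ and $M'' \in \calC''$. As $\calC$ is irreducible and $c_\calC^\phi$ is a nonzero regular function, its non-vanishing locus is a dense open subset of $\calC$ and hence meets $S$; and since the vanishing of the semi-invariant $c_\bd^\phi$ depends only on the isomorphism class of the module, I obtain $M' \in \calC'$ and $M'' \in \calC''$ with $c_\bd^\phi(M' \oplus M'') \neq 0$.

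Finally I would apply Lemma~\ref{lemma direct sum} to $N := M' \oplus M'' \in \mod_\Lambda(\bd)$. Part~\eqref{point direct sum1} forces $\theta^\phi(\bd') = 0$, for otherwise $c_\bd^\phi(N) = 0$, contradicting the previous step; in particular $c_{\bd'}^\phi$ is defined. Then part~\eqref{point direct sum2} gives
\[
0 \neq c_\bd^\phi(N) = c_{\bd'}^\phi(M') \cdot c_{\bd''}^\phi(M''),
\]
so $c_{\bd'}^\phi(M') \neq 0$, and therefore the restriction $c_{\calC'}^\phi$ is a nonzero semi-invariant on $\calC'$ of weight $\theta^\phi = \theta$. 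Hence $\SI[\calC']_\theta \neq 0$.

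I do not anticipate a genuine obstacle here: the argument is a routine combination of the spanning statement for the $c_\calC^\phi$, the multiplicativity of $c^\phi$ under direct sums, and a density argument. The two places needing a moment of care are the exact identification $\theta^\phi = \theta$ mentioned above, and the passage from an arbitrary point of the irreducible component $\calC$ at which $c_\calC^\phi$ is nonzero to a point lying in the dense subset $S$ of decomposable modules.
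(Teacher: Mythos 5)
Your proposal is correct and follows essentially the same route as the paper: pick $\phi \in \calX_{\Bbbk Q}(\bc_\theta)$ with $c_\calC^\phi \neq 0$, move to a point of the dense decomposable locus (the paper's ``without loss of generality $M = M' \oplus M''$''), and conclude via the two parts of Lemma~\ref{lemma direct sum}. Your extra checks (that $\theta^\phi = \langle \bc_\theta, - \rangle_{\Bbbk Q} = \theta$ after the paper's normalization, and that non-vanishing of the semi-invariant only depends on the isomorphism class) are correct and merely make explicit what the paper leaves implicit.
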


\begin{proof}
Write $\calC = \calC' \oplus \calC''$. By assumption there exists $\phi \in \calX_{\Bbbk Q} (\bc_\theta)$ and $M \in \calC$ such that $c_\calC^\phi (M) \neq 0$. Without loss of generality we may assume that $M = M' \oplus M''$ for $M' \in \calC'$ and $M'' \in \calC''$. Lemma~\ref{lemma direct sum}\eqref{point direct sum1} implies that $\theta (\bdim M_1) = 0 = \theta (\bdim M_2)$. Consequently, Lemma~\ref{lemma direct sum}\eqref{point direct sum2} implies that $c_\calC^\phi (M) = c_{\calC'}^\phi (M') \cdot c_{\calC''}^\phi (M'')$. In particular, $c_{\calC'}^\phi (M') \neq 0$.
\end{proof}

\begin{lemma} \label{lemma nonzero}
Let $\Lambda$ be an algebra, $\bd$ a dimension vector, $\calC$ an irreducible component of $\mod_\Lambda (\bd)$, and $\theta$ a weight such that $\SI [\calC]_\theta \neq 0$. Then there exists an open subset $\calU$ of $\calX_{\Bbbk Q} (\bc_\theta)$ such that $c_\calC^\phi \neq 0$ for all $\phi \in \calU$.
\end{lemma}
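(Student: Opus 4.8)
The plan is to take as $\calU$ the non-vanishing locus of the family itself, namely $\calU := \{\phi \in \calX_{\Bbbk Q}(\bc_\theta) : c_\calC^\phi \neq 0\}$, and to verify that this set is open and non-empty. Non-emptiness is immediate from the discussion preceding the lemma: the semi-invariants $c_\calC^\phi$ with $\phi$ ranging over $\calX_{\Bbbk Q}(\bc_\theta)$ span $\Bbbk[\calC]_\theta = \SI[\calC]_\theta$, which is non-zero by hypothesis, so at least one $c_\calC^\phi$ is non-zero. Thus the whole content is the openness of $\calU$.

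For openness, the key observation I would record is that for each fixed $M \in \calC$ the assignment $\phi \mapsto c_\calC^\phi(M) = \det M(\phi)$ is a regular function on $\calX_{\Bbbk Q}(\bc_\theta)$. This is just a matter of unwinding the definitions: the block $M(\phi_{i,j})$ of $M(\phi)$ depends linearly on the coefficients of the paths occurring in $\phi_{i,j}$ (each $M(\omega)$ being a product of the matrices $M(\alpha)$ for the arrows $\alpha$ of $\omega$), so $\det M(\phi)$ is polynomial in the coordinates of $\calX_{\Bbbk Q}(\bc_\theta) \subseteq \Bbbk Q(\bx, \by)$. Hence, for each $M \in \calC$, the set $\calX_{\Bbbk Q}(\bc_\theta) \setminus Z_M$ with $Z_M := \{\phi : c_\calC^\phi(M) = 0\}$ is open. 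Since a regular function on the variety $\calC$ vanishes precisely when it vanishes at every point, one has $c_\calC^\phi = 0$ if and only if $\phi \in Z_M$ for all $M \in \calC$; therefore $\calU = \bigcup_{M \in \calC}\bigl(\calX_{\Bbbk Q}(\bc_\theta) \setminus Z_M\bigr)$ is a union of open sets, hence open. This completes the argument.

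I do not expect a serious obstacle: the statement is a soft topological consequence of the joint regularity of $c$ in the pair $(\phi, M)$ together with the spanning property already established, and the only thing that really needs to be checked is that $c_\calC^\phi(M)$ is regular in $\phi$. If one prefers to reduce to finitely many conditions, one can instead use Lemma~\ref{lemma generate}\eqref{point generate1} to pick $\phi_1, \ldots, \phi_m \in \calX_{\Bbbk Q}(\bc_\theta)$ whose restrictions form a basis of $\Bbbk[\calC]_\theta$ (with $m \geq 1$ since $\SI[\calC]_\theta \neq 0$) and $M_1, \ldots, M_m \in \calC$ with $\det[c_\calC^{\phi_i}(M_j)]_{1 \leq i, j \leq m} \neq 0$; the evaluation map $\Bbbk[\calC]_\theta \to \Bbbk^m$, $f \mapsto (f(M_1), \ldots, f(M_m))$, is then injective, so $c_\calC^\phi = 0$ exactly when $c_\calC^\phi(M_j) = 0$ for all $j$, and $\calU = \bigcup_{j = 1}^m \{\phi : c_\calC^\phi(M_j) \neq 0\}$ is visibly open and non-empty.
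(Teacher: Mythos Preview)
Your proof is correct and rests on the same observation as the paper's: for each fixed $M\in\calC$, the map $\phi\mapsto c_\calC^\phi(M)=\det M(\phi)$ is regular on $\calX_{\Bbbk Q}(\bc_\theta)$. The only packaging difference is that the paper picks a single $M$ (one with $c_\calC^{\phi_0}(M)\neq 0$ for some $\phi_0$) and sets $\calU:=\{\phi:c_\calC^\phi(M)\neq 0\}$, whereas you take the union over all $M$ to obtain the maximal such $\calU$; either choice works.
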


\begin{proof}
There exist $\phi_0 \in \calP_{\Bbbk Q} (\bc_\theta)$ and $M \in \calC$ such that $c_\calC^{\phi_0} (M) \neq 0$. We define a function $\Phi : \calX_{\Bbbk Q} (\bc_\theta) \to \Bbbk$ by
\[
\Phi (\phi) := c_{\calC}^\phi (M) \qquad (\phi \in \calX_{\Bbbk Q} (\bc_\theta)).
\]
This is a regular function and we take $\calU := \Phi^{-1} (\Bbbk^\times)$.
\end{proof}

Now we present the second construction. As above let $\Lambda$ be an algebra with the Gabriel quiver $Q$, $\bd$ a dimension vector, $\calC$ an irreducible component of $\mod_\Lambda (\bd)$, and $\theta$ a weight such that $\SI [\calC]_\theta \neq 0$. Next let $\Lambda_{\calC} := \Lambda / \Ann \calC$, where $\Ann \calC := \bigcap_{M \in \calC} \Ann M $. Consequently, $\calC$ is a faithful irreducible component of $\mod_{\Lambda_{\calC}} (\bd)$. Again there exists $\bc_{\theta, \calC} \in \bbZ^{Q_0}$ such that $\theta = \langle \bc_{\theta, \calC}, - \rangle_{\Lambda_{\calC}}$. Since $\Bbbk [\calC]_\theta \neq 0$, $\bc_{\theta, \calC}$ is a dimension vector and $\calP_{\calC} (\theta) := \calP_{\Lambda_{\calC}} (\bc_{\theta, \calC}) \neq \varnothing$ (see~\cite{DerksenWeyman2002}*{Theorem~1}). Moreover, \cite{DerksenWeyman2002}*{Theorem~1}~also says that $\Bbbk [\calC]_\theta$ is spanned by the semi-invariants $c_{\Lambda_\calC, \calC}^V$, $V \in \calP_\calC (\theta)$.

It is known, that $\ol{\calP}_\calC (\theta)$ is an irreducible component of $\mod_{\Lambda_{\calC}} (\bc_{\theta, \calC})$~\cite{BarotSchroer}*{Proposition~3.1}. It is quite easy to observe that the generic decomposition of $\ol{\calP}_\calC (\theta)$ is of the form
\[
\ol{\calP}_\calC (\theta) = \ol{\calP}_{\Lambda_{\calC}} (\bc_1) \oplus \cdots \oplus \ol{\calP}_{\Lambda_{\calC}} (\bc_n)
\]
for some dimension vectors $\bc_1$, \ldots, $\bc_n$, such that $\bc_{\theta, \calC} = \bc_1 + \cdots + \bc_n$. Obviously $\bc_1$, \ldots, $\bc_n$ are the generic summands of $\bc_{\theta, \calC}$ (at $\ol{\calP}_\calC (\theta)$). If we put $\theta_i := \langle \bc_i, - \rangle_{\Lambda_\calC}$, then we call the presentation
\[
\theta = \theta_1 + \ldots + \theta_n
\]
the generic decomposition of $\theta$ at $\calC$. Moreover, we call $\theta_1$, \ldots, $\theta_n$ the generic summands of $\theta$ at $\calC$.

As a first consequence we get the following.

\begin{lemma} \label{lemma decomposition}
Let $\Lambda$ be a triangular algebra, $\bd$ a dimension vector, $\calC$ an irreducible component of $\mod_\Lambda (\bd)$, and $\theta$ a weight such that $\SI [\calC]_\theta \neq 0$. If $\theta = \theta_1 + \cdots + \theta_n$ is the generic decomposition of $\theta$ at $\calC$, then the image of map
\[
\SI [\calC]_{\theta_1} \times \cdots \times \SI [\calC]_{\theta_n} \to \SI [\calC]_\theta, (f_1, \ldots, f_n) \mapsto f_1 \cdots f_n,
\]
spans $\SI [\calC]_\theta$. In particular, $\SI [\calC]_{\theta_i} \neq 0$ for each $i \in [1, n]$.
\end{lemma}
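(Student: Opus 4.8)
The plan is to exploit the generic decomposition $\ol{\calP}_\calC(\theta) = \ol{\calP}_{\Lambda_\calC}(\bc_1) \oplus \cdots \oplus \ol{\calP}_{\Lambda_\calC}(\bc_n)$ together with the fact that $\Bbbk[\calC]_\theta$ is spanned by the determinantal semi-invariants $c_{\Lambda_\calC, \calC}^V$ for $V$ ranging over $\calP_\calC(\theta)$. First I would recall that, by the discussion preceding the lemma, every element of $\SI[\calC]_\theta$ is a linear combination of semi-invariants $c_{\Lambda_\calC, \calC}^V$ with $V \in \calP_\calC(\theta)$; moreover, by Lemma~\ref{lemma generate}\eqref{point generate2} it suffices to let $V$ range over any fixed non-empty open subset of $\calP_\calC(\theta)$. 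Since the locus of modules whose decomposition type matches the generic one, i.e.\ $V \simeq V_1 \oplus \cdots \oplus V_n$ with $V_i \in \ol{\calP}_{\Lambda_\calC}(\bc_i)$, is exactly the generic behaviour on the component $\ol{\calP}_\calC(\theta)$ (this is the content of the generic decomposition), such $V$ form a dense subset; intersecting with the open set furnished by Lemma~\ref{lemma generate}\eqref{point generate2} we may assume from the outset that every $V$ we consider splits as $V_1 \oplus \cdots \oplus V_n$ with $V_i \in \ol{\calP}_{\Lambda_\calC}(\bc_i) = \calP_{\Lambda_\calC}(\bc_i)$ generically.

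The second step is to compute $c_{\Lambda_\calC, \calC}^{V_1 \oplus \cdots \oplus V_n}$ in terms of the individual factors. Choosing a projective presentation $\phi_i$ of each $V_i$ which is a monomorphism, the direct sum $\phi_1 \oplus \cdots \oplus \phi_n$ is a monomorphic projective presentation of $V$, and $\theta^{V} = \theta^{V_1} + \cdots + \theta^{V_n}$ with each $\theta^{V_i} = \langle \bc_i, -\rangle_{\Lambda_\calC} = \theta_i$. Here I would invoke Lemma~\ref{lemma direct sum bis} (applied with the pieces of the presentation) to conclude
\[
c_{\Lambda_\calC, \bd}^{V_1 \oplus \cdots \oplus V_n} = c_{\Lambda_\calC, \bd}^{V_1} \cdots c_{\Lambda_\calC, \bd}^{V_n},
\]
and hence, restricting to $\calC$, that $c_{\Lambda_\calC, \calC}^{V} = c_{\Lambda_\calC, \calC}^{V_1} \cdots c_{\Lambda_\calC, \calC}^{V_n}$, where $c_{\Lambda_\calC, \calC}^{V_i} \in \SI[\calC]_{\theta_i}$. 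This exhibits each spanning element of $\SI[\calC]_\theta$ as a product lying in the image of the multiplication map, so the image spans. The final clause $\SI[\calC]_{\theta_i} \neq 0$ is then immediate: if some $\SI[\calC]_{\theta_i}$ were zero, every product $f_1 \cdots f_n$ would vanish, forcing $\SI[\calC]_\theta = 0$, contrary to hypothesis.

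The main obstacle is the density claim in the first step: one must be sure that the modules $V \in \calP_\calC(\theta)$ which decompose according to the generic type of $\ol{\calP}_\calC(\theta)$ really do form a dense (equivalently, non-empty open) subset, so that Lemma~\ref{lemma generate}\eqref{point generate2} can be applied to this subset. This is where the hypothesis that $\Lambda$ is triangular enters, via the fact that $\ol{\calP}_\calC(\theta)$ is an irreducible component of $\mod_{\Lambda_\calC}(\bc_{\theta,\calC})$ (cited from~\cite{BarotSchroer}) whose generic decomposition is of the stated direct-sum form — the genericity of the decomposition on an irreducible component precisely means that the matching locus contains a non-empty open subset. A secondary technical point is the compatibility of the monomorphic-presentation convention with Lemma~\ref{lemma direct sum bis}: one needs that a direct sum of monomorphic projective presentations is again monomorphic and that the associated weight is additive, both of which are routine. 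Once the density of the good locus is in hand, the rest is the short multiplicativity computation above.
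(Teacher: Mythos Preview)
Your proposal is correct and follows essentially the same route as the paper: the paper also notes that the set of $V \in \calP_\calC(\theta)$ admitting a decomposition $V \simeq V_1 \oplus \cdots \oplus V_n$ with $V_i \in \calP_{\Lambda_\calC}(\bc_i)$ contains a non-empty open subset, invokes Lemma~\ref{lemma generate}\eqref{point generate2} to span $\SI[\calC]_\theta$ by the corresponding $c_{\Lambda_\calC,\calC}^{V_1 \oplus \cdots \oplus V_n}$, and then applies Lemma~\ref{lemma direct sum bis} to factor these as products. One small wording quibble: Lemma~\ref{lemma generate}\eqref{point generate2} does not ``furnish'' an open set for you to intersect with---rather, once you have exhibited a non-empty open subset (here coming from the generic decomposition), the lemma lets you restrict to it; your later paragraph states this correctly.
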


\begin{proof}
Let $\bc_1$, \ldots, $\bc_n$ be the dimension vectors corresponding to the weights $\theta_1$, \ldots, $\theta_n$, respectively, in the sense explained above.
The set $\calV$ of $V \in \calP_\calC (\theta)$ such that $V \simeq V_1 \oplus \cdots \oplus V_n$ for $V_1 \in \calP_{\Lambda_\calC} (\bc_1)$, \ldots, $V_n \in \calP_{\Lambda_\calC} (\bc_n)$, contains an open subset of $\calP_\calC (\theta)$. Lemma~\ref{lemma generate}\eqref{point generate2} implies that $\SI [\calC]_\theta$ is spanned by the semi-invariants $c_{\Lambda_{\calC}, \calC}^{V_1 \oplus \cdots \oplus V_n}$, $V_1 \in \calP_{\Lambda_\calC} (\bc_1)$, \ldots, $V_n \in \calP_{\Lambda_\calC} (\bc_n)$. Moreover, Lemma~\ref{lemma direct sum bis} implies that
\[
c_{\Lambda_{\calC}, \calC}^{V_1 \oplus \cdots \oplus V_n} = c_{\Lambda_{\calC}, \calC}^{V_1} \cdots c_{\Lambda_{\calC}, \calC}^{V_n}
\]
for all $V_1 \in \calP_{\Lambda_\calC} (\bc_1)$, \ldots, $V_n \in \calP_{\Lambda_\calC} (\bc_n)$, hence the claim follows.
\end{proof}

As a next consequence we obtain the following useful fact.

\begin{lemma} \label{lemma dimension}
Let $\Lambda$ be a triangular algebra, $\bd$ a dimension vector, $\calC$ an irreducible component of $\mod_\Lambda (\bd)$, and $\theta$ a weight such that $\SI [\calC]_\theta \neq 0$. If $\calP_{\calC} (\theta)$ contains a dense orbit, then $\dim_\Bbbk \SI [\calC]_\theta = 1$.
\end{lemma}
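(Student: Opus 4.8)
The plan is to combine the Derksen--Weyman description of $\SI[\calC]_\theta$ recalled just before Lemma~\ref{lemma decomposition} (in its faithful form, \cite{DerksenWeyman2002}*{Theorem~1}: $\SI[\calC]_\theta$ is spanned by the semi-invariants $c_{\Lambda_\calC, \calC}^V$ with $V$ ranging over $\calP_\calC(\theta)$) with Lemma~\ref{lemma generate}\eqref{point generate2}, which allows us to restrict $V$ to any non-empty open subset of $\calP_\calC(\theta)$. The point is that a dense orbit is such an open subset, and that all semi-invariants coming from modules in a single orbit are proportional.

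First I would record that $\calP_\calC(\theta)$ is irreducible: its closure $\ol{\calP}_\calC(\theta)$ is an irreducible component of $\mod_{\Lambda_\calC}(\bc_{\theta, \calC})$ by~\cite{BarotSchroer}*{Proposition~3.1}, and $\calP_\calC(\theta)$ is by construction a dense subset of $\ol{\calP}_\calC(\theta)$, so it is irreducible as well. Let $\calO := \calO(V_0)$ be the $\GL(\bc_{\theta, \calC})$-orbit in $\calP_\calC(\theta)$ which is assumed to be dense. Since orbits of algebraic group actions are locally closed, $\calO$ is a dense locally closed subset of the irreducible variety $\calP_\calC(\theta)$; a dense locally closed subset of an irreducible variety is open, so $\calO$ is a non-empty open subset of $\calP_\calC(\theta)$.

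Now apply Lemma~\ref{lemma generate}\eqref{point generate2} with $\calV := \calO$: we obtain
\[
\SI[\calC]_\theta = \spn \{ c_{\Lambda_\calC, \calC}^V \mid V \in \calP_\calC(\theta) \} = \spn \{ c_{\Lambda_\calC, \calC}^V \mid V \in \calO \}.
\]
Every $V \in \calO$ is isomorphic to $V_0$, so any monic projective presentation $\phi$ of $V_0$ is also a monic projective presentation of $V$; by the remark preceding the definition of $c_{\Lambda, \bd}^{(-)}$ in Section~\ref{section semiinvariants} (two monic projective presentations of the same module yield semi-invariants that agree up to a non-zero scalar), $c_{\Lambda_\calC, \calC}^V$ is a non-zero scalar multiple of $c_{\Lambda_\calC, \calC}^{V_0}$. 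Hence $\SI[\calC]_\theta = \Bbbk \cdot c_{\Lambda_\calC, \calC}^{V_0}$, and since $\SI[\calC]_\theta \neq 0$ by hypothesis we conclude $c_{\Lambda_\calC, \calC}^{V_0} \neq 0$ and $\dim_\Bbbk \SI[\calC]_\theta = 1$.

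The only genuinely delicate point is the topological observation that the dense orbit is open in $\calP_\calC(\theta)$ (which in turn relies on $\ol{\calP}_\calC(\theta)$ being an irreducible component); once that is in place the rest is a direct assembly of the lemmas already proved, together with the $\GL$-invariance of $c_{\Lambda_\calC, \calC}^V$ under isomorphism of $V$.
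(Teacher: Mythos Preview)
Your proof is correct and follows the same approach as the paper: apply Lemma~\ref{lemma generate}\eqref{point generate2} to the dense orbit $\calO(V_0)$, viewed as an open subset of $\calP_\calC(\theta)$, and conclude that $\SI[\calC]_\theta$ is spanned by the single semi-invariant $c_{\Lambda_\calC,\calC}^{V_0}$. The paper's proof is a one-line version of yours; you have simply spelled out why the dense orbit is open (irreducibility of $\calP_\calC(\theta)$ plus local closedness of orbits), which the paper leaves implicit.
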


\begin{proof}
If $\calO (V)$ is a dense orbit in $\calP_{\calC} (\theta)$, then Lemma~\ref{lemma generate}\eqref{point generate2} implies that $\SI [\calC]_\theta$ is spanned by the semi-invariant $c_{\Lambda_\calC, \calC}^V$, hence the claim follows.
\end{proof}

Consequently, we get the following.

\begin{corollary} \label{corollary dense}
Let $\Lambda$ be a triangular algebra, $\bd$ a dimension vector, $\calC$ an irreducible component of $\mod_\Lambda (\bd)$, and $\theta$ a weight such that $\SI [\calC]_\theta \neq 0$. If $\bc'$ is a generic summand of $\bc_{\theta, \calC}$ at $\ol{\calP}_\calC (\theta)$ such that $\calP_{\Lambda_\calC} (\bc')$ contains a dense orbit, then
\[
\SI [\calC]_\theta \simeq \SI [\calC]_{\theta - \theta'},
\]
where $\theta' := \langle \bc', - \rangle_{\Lambda_\calC}$.
\end{corollary}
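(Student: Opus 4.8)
The plan is to realize the claimed isomorphism as multiplication by a single nonzero semi-invariant of weight $\theta'$. First I would reorder the generic decomposition $\theta = \theta_1 + \cdots + \theta_n$ of $\theta$ at $\calC$ so that $\bc' = \bc_1$; then $\theta' = \langle \bc_1, - \rangle_{\Lambda_\calC} = \theta_1$. This reordering is harmless: by definition the generic summands $\theta_1, \ldots, \theta_n$ of $\theta$ at $\calC$ are the weights attached to the generic summands $\bc_1, \ldots, \bc_n$ of $\bc_{\theta, \calC}$ at $\ol\calP_\calC(\theta)$. Since $\Lambda$ is triangular, so is $\Lambda_\calC = \Lambda / \Ann \calC$, and hence $\langle -, - \rangle_{\Lambda_\calC}$ is non-degenerate in its first argument (its Gram matrix in the basis of simple roots is unitriangular, hence invertible); therefore the dimension vector attached to $\theta'$ over $\Lambda_\calC$ is exactly $\bc_{\theta', \calC} = \bc_1 = \bc'$, and consequently $\calP_\calC(\theta') = \calP_{\Lambda_\calC}(\bc')$, which by hypothesis contains a dense orbit.

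Next I would pin down the crucial one-dimensionality. Lemma~\ref{lemma decomposition} applied to the generic decomposition of $\theta$ at $\calC$ gives $\SI[\calC]_{\theta'} = \SI[\calC]_{\theta_1} \neq 0$, and then Lemma~\ref{lemma dimension} applied to the weight $\theta'$ --- which is legitimate precisely because $\calP_\calC(\theta') = \calP_{\Lambda_\calC}(\bc')$ contains a dense orbit --- yields $\dim_\Bbbk \SI[\calC]_{\theta'} = 1$. Fix a nonzero $f' \in \SI[\calC]_{\theta'}$.

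Then I would study the $\Bbbk$-linear multiplication map $m_{f'} \colon \SI[\calC]_{\theta - \theta'} \to \SI[\calC]_\theta$, $f \mapsto f' f$. It is well defined, since a product of semi-invariants of weights $\alpha$ and $\beta$ is a semi-invariant of weight $\alpha + \beta$ and $(\theta - \theta') + \theta' = \theta$, and it is injective, because $\calC$ is irreducible, so $\Bbbk[\calC]$ is an integral domain and $f' \neq 0$. For surjectivity I would invoke Lemma~\ref{lemma decomposition} once more: $\SI[\calC]_\theta$ is spanned by the products $f_1 \cdots f_n$ with $f_i \in \SI[\calC]_{\theta_i}$; since $\SI[\calC]_{\theta_1} = \Bbbk f'$, every such product equals $f'$ times an element of $\SI[\calC]_{\theta_2} \cdots \SI[\calC]_{\theta_n} \subseteq \SI[\calC]_{\theta_2 + \cdots + \theta_n} = \SI[\calC]_{\theta - \theta'}$, hence lies in the image of $m_{f'}$. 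Thus $m_{f'}$ is bijective, proving $\SI[\calC]_\theta \simeq \SI[\calC]_{\theta - \theta'}$.

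I do not anticipate a real obstacle: the statement is essentially bookkeeping on top of Lemmas~\ref{lemma decomposition} and~\ref{lemma dimension}. The one point that needs a moment's care is the identification of $\bc'$ with the dimension vector $\bc_{\theta', \calC}$ attached to $\theta'$ over $\Lambda_\calC$ (equivalently, that passing from a generic summand $\bc'$ to its weight $\theta'$ and back recovers $\bc'$), since this is exactly what licenses the application of Lemma~\ref{lemma dimension} to $\theta'$ and thereby delivers the one-dimensionality of $\SI[\calC]_{\theta'}$ that drives the whole argument.
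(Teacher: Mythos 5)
Your proposal is correct and follows essentially the same route as the paper: Lemma~\ref{lemma decomposition} gives $\SI[\calC]_{\theta'}\neq 0$, Lemma~\ref{lemma dimension} gives $\dim_\Bbbk \SI[\calC]_{\theta'}=1$, and multiplication by a fixed nonzero $f\in\SI[\calC]_{\theta'}$ is surjective by Lemma~\ref{lemma decomposition} and injective by irreducibility of $\calC$. Your extra check that $\bc_{\theta',\calC}=\bc'$ (so that Lemma~\ref{lemma dimension} indeed applies to $\theta'$) is a point the paper leaves implicit, and it is handled correctly.
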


\begin{proof}
Lemma~\ref{lemma decomposition} implies that $\SI [\calC]_{\theta'} \neq 0$. Together with Lemma~\ref{lemma dimension} this implies that $\dim_\Bbbk \SI [\calC]_{\theta'} = 1$. Fix a non-zero semi-invariant $f \in \SI [\calC]_{\theta'}$. Lemma~\ref{lemma decomposition} implies that the map
\[
\SI [\calC]_{\theta - \theta'} \to \SI [\calC]_\theta, \; c \mapsto f \cdot c,
\]
is surjective. Since $\calC$ is irreducible, this is also injective, and the claim follows.
\end{proof}

\section{Moduli spaces} \label{section moduli}

Let $\Lambda$ be an algebra, $\bd$ a dimension vector, and $\calC$ an irreducible component of $\mod_\Lambda (\bd)$. If $\theta$ is a weight, then a $\Lambda$-module $M \in \calC$ is called $\theta$-semistable if there exists $f \in \SI [\calC]_{p \theta}$, for some $p \in \bbN_+$, such that $f (M) \neq 0$. King~\cite{King} has proved that $M$ is $\theta$-semistable if and only if $\theta (\bdim M) = 0$ and $\theta (\bdim N) \leq 0$ for each submodule $N$ of $M$. We denote by $\calC_\theta^{\sstab}$ the set of $\theta$-semistable $\Lambda$-modules in $\calC$. King has also constructed a coarse moduli $\calM (\calC)_\theta^{\sstab}$ for the $\theta$-semistable modules in $\calC$ (up to an equivalence, which identifies modules which have the same simple composition factors within the category of $\theta$-semistable modules). By definition
\[
\calM (\calC)_\theta^{\sstab} = \Proj \Bigl( \bigoplus_{p \in \bbN} \SI [\calC]_{p \theta} \Bigr).
\]

\begin{lemma} \label{lemma summand orbit}
Let $\Lambda$ be a triangular algebra, $\bd$ a dimension vector, $\calC$ an irreducible component of $\mod_\Lambda (\bd)$, and $\theta$ a weight such that $\calC_\theta^{\sstab} \neq \varnothing$. If $\calC = \calC_1 \oplus \calC_2$ for irreducible components $\calC_1$ and $\calC_2$ of $\mod_\Lambda (\bd_1)$ and $\mod_\Lambda (\bd_2)$, respectively, and $\calC_2$ is an orbit closure, then
\[
\calM (\calC)_\theta^{\sstab} \simeq \calM (\calC_1)_\theta^{\sstab}.
\]
\end{lemma}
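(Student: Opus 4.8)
The plan is to build a degreewise isomorphism of the graded rings $\bigoplus_{p\ge 0}\SI [\calC]_{p\theta}$ and $\bigoplus_{p\ge 0}\SI [\calC_1]_{p\theta}$ and then invoke King's description $\calM (\calC)_\theta^{\sstab}=\Proj\bigl(\bigoplus_p\SI [\calC]_{p\theta}\bigr)$. First I would reduce to the case $\SI [\calC]_\theta\neq 0$. Since $\calC_\theta^{\sstab}\neq\varnothing$ there is some $p_0\ge 1$ with $\SI [\calC]_{p_0\theta}\neq 0$, and replacing $\theta$ by $p_0\theta$ replaces each of the two graded rings by its $p_0$-th Veronese subring, so it changes neither $\calM (\calC)_\theta^{\sstab}$ nor $\calM (\calC_1)_\theta^{\sstab}$ up to isomorphism. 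After this reduction Lemma~\ref{lemma summand} gives $\SI [\calC_1]_\theta\neq 0$ and $\SI [\calC_2]_\theta\neq 0$; since $\Bbbk[\calC_2]$ is a domain this yields $\SI [\calC_2]_{p\theta}\neq 0$ for every $p\ge 1$, and I also note that $\theta(\bd_1)=0$.

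Next I would write $\calC_2=\ol{\calO(M_2)}$, fix a representative $M_2^0$ of the dense orbit, and fix for each vertex $x$ a decomposition $\Bbbk^{\bd(x)}=\Bbbk^{\bd_1(x)}\oplus\Bbbk^{\bd_2(x)}$. These produce a morphism $\Psi\colon\calC_1\to\calC$, $\Psi(M_1):=M_1\oplus M_2^0$, which is equivariant for the block-diagonal embedding $\GL(\bd_1)\hookrightarrow\GL(\bd)$. A short character computation then shows that pullback along $\Psi$ sends $\SI [\calC]_{p\theta}$ into $\SI [\calC_1]_{p\theta}$, so $\Psi^*$ is a homomorphism of the two graded rings, and it suffices to prove that $\Psi^*$ is bijective in every degree $p\ge 1$.

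For surjectivity I would use that $\SI [\calC]_{p\theta}$ is spanned by the determinantal semi-invariants $c_\calC^\phi$, $\phi\in\calX_{\Bbbk Q}(\bc_{p\theta})$. Since $\theta^\phi(\bd_1)=p\theta(\bd_1)=0$, Lemma~\ref{lemma direct sum}\eqref{point direct sum2} gives $\Psi^*c_\calC^\phi=c_{\calC_2}^\phi(M_2^0)\cdot c_{\calC_1}^\phi$; and because $\SI [\calC_2]_{p\theta}\neq 0$, Lemma~\ref{lemma nonzero} furnishes a non-empty open $\calU\subseteq\calX_{\Bbbk Q}(\bc_{p\theta})$ on which $c_{\calC_2}^\phi\neq 0$, whence $c_{\calC_2}^\phi(M_2^0)\neq 0$ for $\phi\in\calU$ (a semi-invariant not vanishing identically on $\calC_2$ cannot vanish on the dense orbit of $M_2^0$). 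Thus the image of $\Psi^*$ in degree $p$ contains $\spn\{c_{\calC_1}^\phi:\phi\in\calU\}$, which is all of $\SI [\calC_1]_{p\theta}$ by Lemma~\ref{lemma generate}\eqref{point generate1}. For injectivity the key observation is that $\GL(\bd)\cdot\Psi(\calC_1)$ is dense in $\calC=\calC_1\oplus\calC_2$: degenerating the second summand within $\ol{\calO(M_2)}$ degenerates $M_1\oplus M_2^0$, so every module $M_1\oplus M_2'$ with $M_1\in\calC_1$, $M_2'\in\calC_2$ lies in $\ol{\GL(\bd)\cdot\Psi(\calC_1)}$. Hence a semi-invariant vanishing on $\Psi(\calC_1)$ vanishes on $\GL(\bd)\cdot\Psi(\calC_1)$ and therefore on all of the irreducible variety $\calC$, so it is zero.

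The steps that I expect to require the most care are the verification that $\Psi$ actually lands in $\calC$ and interacts correctly with the group actions and the weights (so that $\Psi^*$ is a well-defined graded ring homomorphism between the intended spaces) and the density of $\GL(\bd)\cdot\Psi(\calC_1)$ in $\calC$; the latter is precisely where the hypothesis that $\calC_2$ is an orbit closure, rather than an arbitrary indecomposable component, enters. The initial reduction and the fact that $\Proj$ is unchanged on passing to a Veronese subring are standard, but they must be stated explicitly, since they are what promotes the degreewise isomorphism of graded rings to the claimed isomorphism $\calM (\calC)_\theta^{\sstab}\simeq\calM (\calC_1)_\theta^{\sstab}$.
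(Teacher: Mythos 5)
Your proposal is correct and follows essentially the same route as the paper's own proof: the same map $M_1 \mapsto M_1 \oplus M_2^0$, the same degreewise comparison of $\SI[\calC]_{p\theta}$ with $\SI[\calC_1]_{p\theta}$, surjectivity via the determinantal spanning set together with Lemmas~\ref{lemma direct sum}, \ref{lemma summand}, \ref{lemma nonzero} and \ref{lemma generate}, and injectivity via density of $\GL(\bd)\cdot\Psi(\calC_1)$ in $\calC$. The only differences are presentational: you spell out the Veronese reduction behind ``without loss of generality $\SI[\calC]_\theta\neq 0$'' and the density argument that the paper declares clear.
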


\begin{proof}
Without loss of generality we may assume that $\SI [\calC]_\theta \neq 0$. Then we will show that
\[
\SI [\calC]_{p \theta} \simeq \SI [\calC_1]_{p \theta}
\]
for each $p \in \bbN$.

Let $\calC_2 = \ol{\calO (M)}$. Consider the map $\Phi : \calC_1 \to \calC$ given by
\[
\Phi (N) := N \oplus M \qquad (N \in \calC_1).
\]
We will show that $\Phi^* : \Bbbk [\calC] \to \Bbbk [\calC_1]$ induces an isomorphism $\Phi_p^* : \SI [\calC]_{p \theta} \simeq \SI [\calC_1]_{p \theta}$ for each $p \in \bbN$.

Fix $p \in \bbN$. Since $\GL (\bd) \times (\calC_1 \oplus \{ M \})$ is a dense subset of $\calC$, it is clear that $\Phi_p^*$ is a monomorphism. Thus it remains to show that $\Phi_p^*$ is an epimorphism. Let $\calX := \calX_{\Bbbk Q} (p \theta)$, where $Q$ is the Gabriel quiver of $\Lambda$. Using Lemma~\ref{lemma
generate}\eqref{point generate1}, it suffices to show that there exists an open subset $\calU$ of $\calX$ such that $c_{\calC_1}^\phi$ is in the image of $\Phi_p^*$ for each $\phi \in \calU$.

It follows from Lemma~\ref{lemma summand} that $\SI [\calC_2]_{p \theta} \neq 0$. Using Lemma~\ref{lemma nonzero}, we obtain, that there exists an open subset $\calU$ of $\calX$ such that $c_{\calC_2}^\phi \neq 0$ for each $\phi \in \calU$. In particular, $c_{\calC_2}^\phi (M) \neq 0$ for each $\phi \in \calU$. Now it follows from Lemma~\ref{lemma direct sum}, that
\[
c_{\calC_1}^\phi = \Phi_p^* \left( \frac{1}{c_{\calC_2}^\phi (M)} c_{\calC}^\phi \right)
\]
for each $\phi \in \calU$.
\end{proof}

\section{Moduli spaces for tame quasi-tilted algebras} \label{section final}

Let $\Lambda$ be a tame quasi-tilted algebra, $\bd$ a dimension vector, $\calC$ an irreducible component of $\mod_\Lambda (\bd)$, and $\theta$ a weight such that $\calC_\theta^{\sstab} \neq \varnothing$. The aim of this section is to prove that $\calM (\calC)_\theta^{\sstab}$ is a product of projective spaces. Let $\Lambda' := \Lambda / \Ann \calC$.

We know that there exist Schur roots $\bd_1$, \ldots, $\bd_n$ such that
\[
\calC = \calC_1 \oplus \cdots \oplus \calC_n,
\]
where $\calC_i := \calC (\bd_i)$, $i \in [1, n]$. Using Lemma~\ref{lemma summand orbit}, we may assume that $\bd_1$, \ldots, $\bd_n$ are isotropic.

Now let $\bc_1$, \ldots, $\bc_m$ be the generic summands of $\bc_{\theta, \calC}$ at $\ol{\calP}_{\Lambda'} (\bc_{\theta, \calC})$. Using Corollary~\ref{corollary dense} we may assume that, for each $j \in [1, m]$, $\calP_{\Lambda'} (\bc_j)$ does not contain a dense orbit. Since $\ol{\calP}_{\Lambda'} (\bc_j)$ is an indecomposable irreducible component, this implies that there exists infinitely many indecomposable $\Lambda'$-modules of dimension vector $\bc_j$, for each $j \in [1, m]$. This also means that, for each $j \in [1, m]$, there exists infinitely many indecomposable $\Lambda$-modules of dimension vector $\bc_j$, hence $\bc_1$, \ldots, $\bc_m$ are isotropic roots of $\chi_\Lambda$ (using more detailed knowledge of $\mod \Lambda$ one could also show that they are Schur roots, but we will not use this).

Before we formulate the next lemma let us recall that if $i \in [1, n]$, $j \in [1, m]$, $M \in \calC_i$ and $V \in \calP_{\Lambda'} (\bc_j)$, then $c_{\calC_i}^V (M) = 0$ if and only if $\Hom_\Lambda (V, M) \neq 0$ or $\Hom_\Lambda (M, \tau_{\Lambda'} V) \neq 0$.

\begin{lemma} \label{lemma support}
With the above notation either $\bc_j$ is a multiplicity of $\bd_i$ or $\supp \bd_i \cap \supp \bc_j = \varnothing$ for all $i \in [1, n]$ and $j \in [1, m]$.
\end{lemma}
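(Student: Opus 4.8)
The plan is to compare $\bd_i$ and $\bc_j$ as isotropic roots of the Euler form of the tame quasi-tilted algebra $\Lambda$, and then invoke the structural dichotomy for indecomposable modules supported on overlapping isotropic roots recalled in Section~\ref{section quasi tilted}. First I would fix $i$ and $j$ and assume $\supp \bd_i \cap \supp \bc_j \neq \varnothing$; the goal is then to show $\bc_j$ is a multiplicity of $\bd_i$. Pick a generic indecomposable $\Lambda'$-module $V$ of dimension vector $\bc_j$ (these exist in infinite supply since $\calP_{\Lambda'}(\bc_j)$ has no dense orbit and $\ol{\calP}_{\Lambda'}(\bc_j)$ is an indecomposable irreducible component), and a generic module $M \in \calC_i$, which we may take indecomposable since $\calC_i$ is an indecomposable irreducible component. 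Both $\bdim M = \bd_i$ and $\bdim V = \bc_j$ are isotropic roots of $\chi_\Lambda$ (for $\bd_i$ this was part of our standing setup after the reduction via Lemma~\ref{lemma summand orbit}; for $\bc_j$ it was established just above).

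The key step is to rule out the ``$\Hom$'' alternatives. The trichotomy from Section~\ref{section quasi tilted} says that for indecomposable modules with isotropic dimension vectors and overlapping supports, either $\Hom_\Lambda(M, V) \neq 0$, or $\Hom_\Lambda(V, M) \neq 0$, or $\bd_i$ and $\bc_j$ are multiplicities of a common isotropic Schur root. I would show that the first two options are incompatible with $\theta$-semistability. Indeed, recall $c_{\calC_i}^V(M) = 0$ iff $\Hom_\Lambda(V, M) \neq 0$ or $\Hom_\Lambda(M, \tau_{\Lambda'} V) \neq 0$, i.e.\ by the Auslander--Reiten formula~\eqref{ARformula1} iff $\Hom_\Lambda(V,M) \neq 0$ or $\operatorname{Ext}^1_\Lambda(V,M) \neq 0$. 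Since $\bc_j$ is a generic summand of $\theta$ and $\calC_i$ is a summand of $\calC$, Lemma~\ref{lemma decomposition} together with Lemma~\ref{lemma summand} (and the fact that $\SI[\calC]_\theta \neq 0$, guaranteed because $\calC_\theta^{\sstab}\neq\varnothing$) forces $c_{\calC_i}^V$ to be a nonzero semi-invariant on $\calC_i$ for generic $V$; hence for generic $M \in \calC_i$ we get $\Hom_\Lambda(V, M) = 0 = \operatorname{Ext}^1_\Lambda(V,M)$. In particular $\Hom_\Lambda(V,M)=0$ kills the second alternative. For the first alternative, $\Hom_\Lambda(M,V)\neq 0$: I would use that $V$ has projective dimension at most $1$ over $\Lambda'$ so~\eqref{ARformula1} gives $\operatorname{Ext}^1_\Lambda(V,M)\cong\Hom_\Lambda(M,\tau_{\Lambda'}V)$; combined with $\operatorname{Ext}^1_\Lambda(V,M)=0$ this yields $\Hom_\Lambda(M,\tau_{\Lambda'}V)=0$, and a dimension/genericity argument (varying $M$ over $\calC_i$, or dually using that generic modules in an isotropic indecomposable component are $\tau$-periodic-like and ``homologically balanced'') forces $\Hom_\Lambda(M,V)=0$ as well. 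This leaves only the third alternative: $\bd_i$ and $\bc_j$ are positive multiples of one common isotropic Schur root $\be$, say $\bd_i = p\be$ and $\bc_j = q\be$; then $\bc_j = (q/p)\bd_i$ is a multiple of $\bd_i$, as desired.

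I expect the main obstacle to be the symmetric treatment of the two $\Hom$ alternatives: the computation $c_{\calC_i}^V(M)\neq 0$ for generic $V$ and $M$ directly gives $\Hom_\Lambda(V,M)=0$ and $\operatorname{Ext}^1_\Lambda(V,M)=0$, but does not by itself give $\Hom_\Lambda(M,V)=0$. To close that gap cleanly I would invoke a dual semi-invariant construction: apply the whole setup to $\Lambda^{\op}$ via the duality $\Du_\Lambda$, so that $\Du_\Lambda M$ and $\Du_\Lambda V$ swap the roles of $\Hom$-source and $\Hom$-target, and the corresponding weight $-\theta$ (or its reflection) is again a weight with nonzero semi-invariant space on the dual component; the same genericity argument then forces $\Hom_\Lambda(M,V)=\Hom_{\Lambda^{\op}}(\Du_\Lambda V,\Du_\Lambda M)=0$. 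One must be slightly careful that the duality sends $\calC_i$ to an indecomposable irreducible component over $\Lambda^{\op}$ with isotropic dimension vector and that $\theta$-semistability is preserved appropriately, but this is standard. Once both $\Hom$ alternatives are excluded, the trichotomy delivers the common isotropic Schur root and the proof concludes.
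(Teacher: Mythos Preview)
Your overall strategy matches the paper's: use $\SI[\calC_i]_{\theta'} \neq 0$ (via Lemmas~\ref{lemma summand} and~\ref{lemma decomposition}) to force $c_{\calC_i}^V(M) \neq 0$ for generic $(M,V)$, hence $\Hom_\Lambda(V,M) = 0$ and $\Hom_\Lambda(M,\tau_{\Lambda'}V) = 0$ generically, and then invoke the trichotomy from Section~\ref{section quasi tilted}. The genuine gap --- which you correctly flag yourself --- is the passage from $\Hom_\Lambda(M,\tau_{\Lambda'}V) = 0$ to $\Hom_\Lambda(M,V) = 0$. Your proposed fix via duality is not clearly workable as stated: after applying $\Du_\Lambda$, the module $\Du_\Lambda V$ has \emph{injective} dimension $\leq 1$ over $(\Lambda')^{\op}$, not projective dimension $\leq 1$, so the determinantal semi-invariant construction does not apply symmetrically. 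One would need the injective-copresentation version of the machinery, a dual notion of generic decomposition of the weight, and a check that $\bc_j$ (or its dual) still appears as a generic summand on the dual side. None of this is immediate, and your sketch does not supply it.

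The paper closes the gap with a single ingredient you almost name in your parenthetical about modules being ``$\tau$-periodic-like'': Crawley-Boevey's theorem on tame algebras~\cite{CrawleyBoevey}*{Theorem~D}, which gives a non-empty open subset $\calV \subseteq \calP_{\Lambda'}(\bc_j)$ with $\tau_{\Lambda'} V \simeq V$ for every $V \in \calV$. For such $V$ one has $\Hom_\Lambda(M,V) \simeq \Hom_\Lambda(M,\tau_{\Lambda'}V)$, so the second $\Hom$-alternative is killed by exactly the same vanishing that kills the first, and Lemma~\ref{lemma generate}\eqref{point generate2} finishes the contradiction. (The paper in fact phrases the dichotomy uniformly --- one of the two $\Hom$-nonvanishings holds for \emph{all} indecomposable pairs $(M,V)$, not just generic ones --- which is a consequence of the tubular structure; your generic formulation would also suffice once the $\tau$-periodicity is in place.) One minor point you glide over at the end: having $\bd_i = p\be$ and $\bc_j = q\be$ for a common isotropic Schur root $\be$, you still need $p = 1$ to conclude that $\bc_j$ is a multiple of $\bd_i$; this holds because $\bd_i$ is itself a Schur root and, for isotropic roots over a tame quasi-tilted algebra, proper multiples of Schur roots are not Schur.
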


\begin{proof}
Fix $i \in [1, n]$ and $j \in [1, m]$. Note that Lemmas~\ref{lemma summand} and~\ref{lemma decomposition} imply that $\SI [\calC_i]_{\theta'} \neq 0$, where $\theta' := \langle \bc_j, - \rangle_{\Lambda'}$. Assume that neither $\bc_j$ is a multiplicity of $\bd_i$ nor $\supp \bd_i \cap \supp \bc_j = \varnothing$. Then Section~\ref{section quasi tilted} implies that one of the following situations holds:
\begin{enumerate}

\item
$\Hom_\Lambda (V, M) \neq 0$, for each indecomposable $\Lambda$-module $M$ with dimension vector $\bd_i$ and each indecomposable $\Lambda$-module $V$ with dimension vector $\bc_j$,
or

\item
$\Hom_\Lambda (M, V) \neq 0$, for each indecomposable $\Lambda$-module $M$ with dimension vector $\bd_i$ and each indecomposable $\Lambda$-module $V$ with dimension vector $\bc_j$.

\end{enumerate}
In the first case we immediately obtain that $\SI [\calC (\bd_i)]_{\theta'} = 0$, contradiction. We show that we get the same conclusion in the second case.

Since $\Lambda$, hence also $\Lambda'$, is tame and there are infinitely many indecomposable $\Lambda'$-modules in $\calP_{\Lambda'} (\bc_j)$, \cite{CrawleyBoevey}*{Theorem~D} implies that there is a non-empty open subset $\calV$ of $\calP_{\Lambda'} (\bc_j)$ such that $\tau_{\Lambda'} V \simeq V$ for each $V \in \calV$. In particular, $\dim \tau_{\Lambda'} V = \bc_j$ for each $V \in \calV$. Thus (2) together with Lemma~\ref{lemma generate}\eqref{point generate2}implies that $\SI [\calC (\bd_i)]_{\theta'} = 0$, and this finishes the proof.
\end{proof}

Let $I'$ be the set of $i \in [1, n]$ such that there exists $j \in [1, m]$ with $\bc_j$ being a multiplicity of $\bd_i$. Let $I'' := [1, n] \setminus I$. Lemma~\ref{lemma support} implies that $\supp \bd_p \cap \supp \bd_q = \varnothing$ if $p \in I'$ and $q \in I''$ (since $\supp \bd_p = \supp \bc_j$ for some $j \in J$). Thus $\calC = \calC_1 \times \calC_2$, where
\[
\calC_1 := \bigoplus_{i \in I'} \calC (\bd_i) \qquad \text{and} \qquad \calC_2 := \bigoplus_{i \in I''} \calC (\bd_i).
\]
Consequently,
\[
\calM (\calC)_\theta^{\sstab} = \calM (\calC_1)_\theta^{\sstab} \times \calM (\calC_2)_\theta^{\sstab}.
\]
If $I' \neq [1, n] \neq I''$, then we get our claim by induction. Hence we have only two cases to consider: either $I' = [1, n]$ or $I'' = [1, n]$.

First assume that $I'' = [1, n]$. Since $\calC$ is a faithful component over $\Lambda'$, hence $\bd$ is a sincere dimension vector over $\Lambda'$. On the other hand, $\supp \bc_j \cap \supp \bd = \varnothing$, hence $\bc_j = 0$. Consequently, $\theta = 0$.
Thus
\[
\SI [\calC]_{p \theta} = \Bbbk [\calC]^{\GL (\bd)} = \Bbbk
\]
for each $p \in \bbN$, and
\[
\calM (\calC)_\theta^{\sstab} = \Proj (\Bbbk [T]) = \{ * \}.
\]

Now assume that $I' = [1, n]$. We can make another reduction in this case. Let $\bd_1'$, \ldots, $\bd_l'$ be the pairwise different vectors among $\bd_1$, \dots, $\bd_n$. For each $p \in [1, l]$, let $I_p$ be the set of $i \in [1, n]$ such that $\bd_i = \bd_p'$. Let $\calC_p' := \bigoplus_{i \in I_p} \calC (\bd_i)$ for $p \in [1, l]$. Lemma~\ref{lemma support} again implies that $\supp \bd_p' \cap \supp \bd_q' = \varnothing$ if $p, q \in [1, l]$ and $p \neq q$. Consequently,
\[
\calC = \calC_1' \times \cdots \times \calC_l'
\]
and
\[
\calM (\calC)_\theta^{\sstab} = \calM (\calC_1')_\theta^{\sstab} \times \cdots \times \calM (\calC_l')_\theta^{\sstab}.
\]
If $l > 1$, then the claim follows by induction again, thus we may assume $l = 1$. In this case~\cite{BobinskiSkowronski}*{Theorem~2} implies that $\mod_\Lambda (\bd)$ is irreducible, hence $\calC = \mod_\Lambda (\bd)$. Hence the claim follows from the following.

\begin{proposition}
Let $\Lambda$ be a tame quasi-tilted algebra and let $\bh$ be an isotropic Schur root of $\chi_\Lambda$. If $n, p \in \bbN_+$, then
\[
\calM (\mod_\Lambda (n \bh))_{p \langle \bh, - \rangle_\Lambda}^{\sstab} \simeq \bbP_\Bbbk^n.
\]
\end{proposition}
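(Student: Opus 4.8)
The plan is to reduce to the case $p = 1$ and then compute the graded ring
\[
R := \bigoplus_{q \geq 0} \SI [\mod_\Lambda (n \bh)]_{q \theta}, \qquad \theta := \langle \bh, - \rangle_\Lambda ,
\]
showing that it is a polynomial ring in $n + 1$ variables with its standard grading. The reduction to $p = 1$ is immediate: $\calM (\mod_\Lambda (n \bh))_{p \theta}^{\sstab}$ is the $\Proj$ of the $p$-th Veronese subring of $R$, and passing to a Veronese subring does not change $\Proj$. Put $\calC := \mod_\Lambda (n \bh)$, which is irreducible by~\cite{BobinskiSkowronski}*{Theorem~2}, and $\Lambda' := \Lambda / \Ann \calC$; it then suffices to prove $R \cong \Bbbk [y_0, \ldots, y_n]$, since $\calM (\calC)_\theta^{\sstab} = \Proj R \cong \bbP_\Bbbk^n$.

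The representation-theoretic input is the structure of the homogeneous tubes of a tame quasi-tilted algebra. Since $\bh$ is an isotropic Schur root, the general $\Lambda'$-module of dimension vector $\bh$ is an indecomposable module $Y$ with $\tau_{\Lambda'} Y \simeq Y$, $\End_{\Lambda'} (Y) = \Bbbk$ and $\pdim_{\Lambda'} Y \leq 1$; such modules $Y_\lambda$ satisfy $\Hom_{\Lambda'} (Y_\lambda, Y_\mu) = 0 = \Ext_{\Lambda'}^1 (Y_\lambda, Y_\mu)$ for $\lambda \neq \mu$ and, once the finitely many degenerate configurations coming from the non-homogeneous tubes are included, are parametrised by $\bbP_\Bbbk^1$. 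Consequently the general module of $\calP_{\Lambda'} (q \bh)$ is $Y_{\lambda_1} \oplus \cdots \oplus Y_{\lambda_q}$ with pairwise distinct $\lambda_i$, so the generic decomposition of $q \bh$ at $\ol{\calP}_{\Lambda'} (q \bh)$ consists of $q$ copies of $\bh$, and likewise the general module of $\calC$ is $Y_{\mu_1} \oplus \cdots \oplus Y_{\mu_n}$ with pairwise distinct $\mu_k$. Combining this with Lemma~\ref{lemma decomposition} and Lemma~\ref{lemma generate}\eqref{point generate2}, one gets that $\SI [\calC]_{q \theta}$ is spanned by the $q$-fold products $c_{\Lambda', \calC}^{Y_{\lambda_1}} \cdots c_{\Lambda', \calC}^{Y_{\lambda_q}}$, with the $\lambda_i$ ranging over a dense open subset of $\bbP_\Bbbk^1$; in particular $R$ is generated in degree one by the semi-invariants $c_{\Lambda', \calC}^{Y_\lambda}$.

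Next I would carry out the determinantal computation that identifies these generators. Fix homogeneous coordinates $[\alpha : \beta]$ on the line $\bbP_\Bbbk^1$ of tubes. For the general module $M = Y_{\mu_1} \oplus \cdots \oplus Y_{\mu_n}$ of $\calC$, Lemma~\ref{lemma direct sum} gives $c_{\Lambda', \calC}^{Y_\lambda} (M) = \prod_{k = 1}^n c_{\Lambda', \bh}^{Y_\lambda} (Y_{\mu_k})$, and $c_{\Lambda', \bh}^{Y_\lambda} (Y_\mu) = 0$ exactly when $\Hom_{\Lambda'} (Y_\lambda, Y_\mu) \neq 0$, that is, exactly when $\lambda = \mu$, the zero being simple by $\Ext^1$-orthogonality; hence $c_{\Lambda', \bh}^{Y_{[\alpha : \beta]}} (Y_{[\alpha' : \beta']})$ is a nowhere-vanishing multiple of $\alpha \beta' - \beta \alpha'$. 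So, up to a unit depending on $M$ alone, $c_{\Lambda', \calC}^{Y_{[\alpha : \beta]}} (M)$ is the binary form $\prod_{k = 1}^n (\alpha \beta_{\mu_k} - \beta \alpha_{\mu_k})$ of degree $n$ in $[\alpha : \beta]$, whose divisor of zeros is $[\mu_1] + \cdots + [\mu_n]$. Letting $y_0, \ldots, y_n \in \SI [\calC]_\theta$ be the functions reading off the coefficients of this form, one concludes: the $c_{\Lambda', \calC}^{Y_\lambda}$ are $\Bbbk$-linear combinations of $y_0, \ldots, y_n$ and span $\SI [\calC]_\theta$, so $y_0, \ldots, y_n$ span $\SI [\calC]_\theta$ and generate $R$; and $y_0, \ldots, y_n$ are algebraically independent in the domain $\Bbbk [\calC]$, because already on the family $\{ Y_{\mu_1} \oplus \cdots \oplus Y_{\mu_n} \}$ the map $M \mapsto (y_0 (M), \ldots, y_n (M))$ has $(n + 1)$-dimensional image --- its projectivisation is the classical isomorphism $\operatorname{Sym}^n \bbP_\Bbbk^1 \cong \bbP_\Bbbk^n$ sending an effective divisor of degree $n$ to the coefficient vector of the corresponding binary form. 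Hence $R \cong \Bbbk [y_0, \ldots, y_n]$ and $\calM (\calC)_\theta^{\sstab} \cong \bbP_\Bbbk^n$.

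I expect the hard part to be the representation-theoretic claims of the last two paragraphs: that the relevant family of bricks of dimension $\bh$ compactifies to exactly a $\bbP_\Bbbk^1$ with the correct boundary contributed by the non-homogeneous tubes, and that the pairing $c_{\Lambda', \bh}^{Y_\lambda} (Y_\mu)$ has a simple zero along the diagonal. This is where the detailed structure theory of tame quasi-tilted algebras is really used --- homogeneous versus non-homogeneous tubes, $\tau$-periodicity, $\Hom$- and $\Ext^1$-orthogonality, and the vanishing of $\langle \bh, - \rangle_\Lambda$ on a whole tube. Along the way one must also settle the technical points that the dimension vector $\bc_{\theta, \calC}$ of Section~\ref{section semiinvariants} may be taken to be $\bh$, that the general module of $\calP_{\Lambda'} (\bh)$ really has projective dimension at most $1$, and that the semi-invariants, being sections of line bundles on the parametrising curve, extend over its finitely many exceptional points.
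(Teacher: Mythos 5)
Your route is exactly the ``more direct proof'' that the paper only alludes to: the paper's own proof is a one-line citation of \cite{DomokosLenzing2002}*{Theorem~7.1}, together with the remark that one could instead use the description of semi-invariants of concealed-canonical algebras (\cite{Bobinski2012}*{Proposition~6.2}) to see that $\bigoplus_{q} \SI [\mod_\Lambda (n \bh)]_{q \langle \bh, - \rangle_\Lambda}$ is a polynomial ring in $n + 1$ variables. So your strategy is the right one, but as written it has a genuine gap at precisely the point those references exist to settle. The decisive claim is that $c_{\Lambda', \bh}^{Y_{[\alpha : \beta]}} (Y_{[\alpha' : \beta']})$ is a unit times $\alpha \beta' - \beta \alpha'$, equivalently that $\lambda \mapsto c_{\Lambda', \calC}^{Y_\lambda}$ is (up to scalars) a \emph{linear} pencil over the index line $\bbP^1_\Bbbk$. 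Hom/Ext-orthogonality of the $Y_\lambda$ gives only the set-theoretic statement that the pairing vanishes exactly on the diagonal; it does not give the order of vanishing, nor even that the pairing is a regular function of $\lambda$ in any algebraic structure on the family of tubes. For that one needs an algebraic family of projective presentations of the $Y_\lambda$ depending linearly on $[\alpha : \beta]$, which is what the concealed-canonical structure of $\supp \bh$ provides and what \cite{Bobinski2012}*{Proposition~6.2} actually proves. Without it, your binary form could have zeros of higher multiplicity (degree $> n$), the coefficient functions $y_0, \ldots, y_n$ need not span $\SI [\calC]_\theta$, and the identification of the generic parameter space with the $n$-th symmetric power of $\bbP^1_\Bbbk$, hence with $\bbP^n_\Bbbk$, does not get off the ground.

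The secondary points you flag but leave open also genuinely need arguments rather than being routine: that one may replace $\langle \bh, - \rangle_\Lambda$ by $\langle \bh, - \rangle_{\Lambda'}$, i.e.\ that $\bc_{\theta, \calC} = \bh$ (the Euler forms of $\Lambda$ and $\Lambda' = \Lambda / \Ann \calC$ differ in general); that the generic module of $\calP_{\Lambda'} (q \bh)$ is a direct sum of $q$ $\tau_{\Lambda'}$-invariant bricks of dimension vector $\bh$ of projective dimension at most $1$, so that Lemma~\ref{lemma decomposition} applies with generic summands $\bh, \ldots, \bh$; and the behaviour at the non-homogeneous tubes, where the indecomposables of dimension vector $\bh$ are not $\tau_{\Lambda'}$-invariant, so the extension of your pencil over these finitely many exceptional points again has to come from the explicit presentations and not from abstract tube considerations. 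None of these statements is false, but each is exactly the structural input the paper discharges by citation, so your text is a plausible outline of the known direct proof rather than a complete one; to make it self-contained you would essentially have to reprove \cite{Bobinski2012}*{Proposition~6.2}, or simply cite it (or \cite{DomokosLenzing2002}*{Theorem~7.1}) as the paper does.
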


\begin{proof}
This is a part of~\cite{DomokosLenzing2002}*{Theorem~7.1}. One may also give a more direct proof, using a description of the semi-invariants for concealed-canonical algebras (the support of $\bh$ is a concealed-canonical algebra) which imply that $\bigoplus_{q \in \bbN} \SI [\mod_\Lambda (n \bh)]_{q \langle \bh, - \rangle_\Lambda}$ is the polynomial ring in $n + 1$ variables (see~\cite{Bobinski2012}*{Proposition~6.2}).
\end{proof}

\bibsection

\begin{biblist}

\bib{AssemSimsonSkowronski}{book}{
   author={Assem, I.},
   author={Simson, D.},
   author={Skowro{\'n}ski, A.},
   title={Elements of the Representation Theory of Associative Algebras. Vol. 1},
   series={London Math. Soc. Stud. Texts},
   volume={65},
   publisher={Cambridge Univ. Press },
   place={Cambridge},
   date={2006},
   pages={x+458},
}

\bib{AuslanderReitenSmalo}{book}{
   author={Auslander, M.},
   author={Reiten, I.},
   author={Smal{\o}, S. O.},
   title={Representation Theory of Artin Algebras},
   series={Cambridge Stud. Adv. Math.},
   volume={36},
   publisher={Cambridge Univ. Press},
   place={Cambridge},
   date={1997},
   pages={xiv+425},
}

\bib{BarotSchroer}{article}{
   author={Barot, M.},
   author={Schr{\"o}er, J.},
   title={Module varieties over canonical algebras},
   journal={J. Algebra},
   volume={246},
   date={2001},
   pages={175--192},
}

\bib{Bobinski2012}{article}{
   author={Bobi{\'n}ski, G.},
   title={Semi-invariants for concealed-canonical algebras},
   eprint={arXiv:1212.4013},
}

\bib{BobinskiSkowronski}{article}{
   author={Bobi{\'n}ski, G.},
   author={Skowro{\'n}ski, A.},
   title={Geometry of modules over tame quasi-tilted algebras},
   journal={Colloq. Math.},
   volume={79},
   date={1999},
   pages={85--118},
}

\bib{Chindris2009}{article}{
   author={Chindris, C.},
   title={Orbit semigroups and the representation type of quivers},
   journal={J. Pure Appl. Algebra},
   volume={213},
   date={2009},
   pages={1418--1429},
}

\bib{Chindris2011}{article}{
   author={Chindris, Calin},
   title={Geometric characterizations of the representation type of hereditary algebras and of canonical algebras},
   journal={Adv. Math.},
   volume={228},
   date={2011},
   pages={1405--1434},
}
		
\bib{Chindris2013}{article}{
   author={Chindris, C.},
   title={On the invariant theory for tame tilted algebras},
   journal={Algebra Number Theory},
   volume={7},
   date={2013},
   pages={193--214},
}

\bib{CarrollChindris}{article}{
   author={Carroll, A.},
   author={Chindris, C.},
   title={On the invariant theory for for acyclic gentle algebras},
   journal={Algebra Number Theory},
   volume={7},
   date={2013},
   pages={193--214},
}

\bib{CrawleyBoevey}{article}{
   author={Crawley-Boevey},
   title={On tame algebras and bocses},
   journal={Proc. London Math. Soc. (3)},
   volume={56},
   date={1988},
   number={3},
   pages={451--483},
}
		
\bib{Crawley-BoeveySchroer}{article}{
   author={Crawley-Boevey, W.},
   author={Schr{\"o}er, J.},
   title={Irreducible components of varieties of modules},
   journal={J. Reine Angew. Math.},
   volume={553},
   date={2002},
   pages={201--220},
}

\bib{DerksenWeyman2000}{article}{
   author={Derksen, H.},
   author={Weyman, J.},
   title={Semi-invariants of quivers and saturation for Littlewood-Richardson coefficients},
   journal={J. Amer. Math. Soc.},
   volume={13},
   date={2000},
  pages={467--479},
}

\bib{DerksenWeyman2002}{article}{
   author={Derksen, H.},
   author={Weyman, J.},
   title={Semi-invariants for quivers with relations},
   journal={J. Algebra},
   volume={258},
   date={2002},
   pages={216--227},
}

\bib{Domokos}{article}{
   author={Domokos, M.},
   title={Relative invariants for representations of finite dimensional algebras},
   journal={Manuscripta Math.},
   volume={108},
   date={2002},
   pages={123--133},
}

\bib{DomokosLenzing2002}{article}{
   author={Domokos, M.},
   author={Lenzing, H.},
   title={Moduli spaces for representations of concealed-canonical algebras},
   journal={J. Algebra},
   volume={251},
   date={2002},
   pages={371--394},
}

\bib{Drozd}{collection.article}{
   author={Drozd, Yu. A.},
   title={Tame and wild matrix problems},
   booktitle={Representation Theory, II},
   series={Lecture Notes in Math.},
   volume={832},
   publisher={Springer},
   place={Berlin},
   date={1980},
   pages={242--258},
}
		
\bib{Happel}{article}{
   author={Happel, D.},
   title={A characterization of hereditary categories with tilting object},
   journal={Invent. Math.},
   volume={144},
   date={2001},
   pages={381--398},
}

\bib{HappelReitenSmalo}{article}{
   author={Happel, D.},
   author={Reiten, I.},
   author={Smal{\o}, S. O.},
   title={Tilting in abelian categories and quasitilted algebras},
   journal={Mem. Amer. Math. Soc.},
   volume={120},
   date={1996},
   pages={viii+88},
}

\bib{Kerner}{article}{
   author={Kerner, O.},
   title={Tilting wild algebras},
   journal={J. London Math. Soc. (2)},
   volume={39},
   date={1989},
   pages={29--47},
}

\bib{King}{article}{
   author={King, A. D.},
   title={Moduli of representations of finite-dimensional algebras},
   journal={Quart. J. Math. Oxford Ser. (2)},
   volume={45},
   date={1994},
   pages={515--530},
}

\bib{LenzingSkowronski}{article}{
   author={Lenzing, H.},
   author={Skowro{\'n}ski, A.},
   title={Quasi-tilted algebras of canonical type},
   journal={Colloq. Math.},
   volume={71},
   date={1996},
   pages={161--181},
}

\bib{Meltzer}{article}{
   author={Meltzer, H.},
   title={Auslander-Reiten components for concealed-canonical algebras},
   journal={Colloq. Math.},
   volume={71},
   date={1996},
   pages={183--202},
}
		
\bib{delaPena}{article}{
   author={de la Pe{\~n}a, J. A.},
   title={On the dimension of the module-varieties of tame and wild algebras},
   journal={Comm. Algebra},
   volume={19},
   date={1991},
   pages={1795--1807},
}

\bib{Richmond}{article}{
   author={Richmond, N. J.},
   title={A stratification for varieties of modules},
   journal={Bull. London Math. Soc.},
   volume={33},
   date={2001},
   pages={565--577},
}

\bib{Ringel}{book}{
   author={Ringel, C. M.},
   title={Tame Algebras and Integral Quadratic Forms},
   series={Lecture Notes in Math.},
   volume={1099},
   publisher={Springer},
   place={Berlin},
   date={1984},
   pages={xiii+376},
}

\bib{Schofield}{article}{
   author={Schofield, A.},
   title={Semi-invariants of quivers},
   journal={J. London Math. Soc. (2)},
   volume={43},
   date={1991},
   number={3},
   pages={385--395},
}
		
\bib{SchofieldvandenBergh}{article}{
   author={Schofield, A.},
   author={van den Bergh, M.},
   title={Semi-invariants of quivers for arbitrary dimension vectors},
   journal={Indag. Math. (N.S.)},
   volume={12},
   date={2001},
   pages={125--138},
}

\bib{Skowronski}{article}{
   author={Skowro{\'n}ski, A.},
   title={Tame quasi-tilted algebras},
   journal={J. Algebra},
   volume={203},
   date={1998},
   pages={470--490},
}

\bib{SkowronskiWeyman}{article}{
   author={Skowro{\'n}ski, A.},
   author={Weyman, J.},
   title={The algebras of semi-invariants of quivers},
   journal={Transform. Groups},
   volume={5},
   date={2000},
   pages={361--402},
}
		
\bib{SkowronskiZwara}{article}{
   author={Skowro{\'n}ski, A.},
   author={Zwara, G.},
   title={Degenerations for indecomposable modules and tame algebras},
   journal={Ann. Sci. \'Ecole Norm. Sup. (4)},
   volume={31},
   date={1998},
   pages={153--180},
}
		
\end{biblist}

\end{document}